\documentclass[letterpaper, 11pt,  reqno]{amsart}
\usepackage[margin=1.2in,marginparwidth=1.5cm, marginparsep=0.5cm]{geometry}
\usepackage[hidelinks]{hyperref}
\usepackage{amsmath,amssymb,amscd,amsthm,amsxtra, esint, xcolor,mathtools,enumerate}
\usepackage[most]{tcolorbox}
\usepackage{cite}
\usepackage{bbm}
\usepackage{enumitem}
\allowdisplaybreaks[2]

\newcommand{\Cov}{\mathrm{Cov}}

\usepackage{color}

\newtheorem{theorem}{Theorem} [section]

\newtheorem{lemma}[theorem]{Lemma}
\newtheorem{proposition}[theorem]{Proposition}
\newtheorem{remark}[theorem]{Remark}

\newtheorem{definition}[theorem]{Definition}
\newtheorem{corollary}[theorem]{Corollary}

\newcommand{\R}{\mathbb{R}}

\numberwithin{equation}{section}
\numberwithin{theorem}{section}

\makeatletter
\@namedef{subjclassname@2020}{\textup{2020} Mathematics Subject Classification}
\makeatother

\begin{document}
\baselineskip = 14pt

\title[Gaussian fluctuation for spatial average]
{
Gaussian fluctuation for spatial average of the space--time fractional stochastic heat equation
}

\author[Y. Li, Y. Hu and L. Yan]
{Yongkang Li, Yaozhong Hu$^*$ and Litan Yan}

\thanks{Y. Li was supported by the Fundamental Research Funds for the Central Universities of China (CUSF-DH-T-2024066). Y. Hu was supported by NSERC Discovery grant RGPIN 2024-05941 and a centennial fund of University of Alberta. L. Yan was supported by the National Natural Science Foundation of China (No. 11971101) and the Natural Science Foundation of Shanghai Municipality (No. 24ZR1402900).}

\address{
Yongkang Li\\
Donghua University, 2999 North Renmin Rd., Songjiang, Shanghai 201620, P.R. China}
\address{Department of Mathematical and statistical sciences, University of Alberta, Edmonton, AB, T6G 2G1, Canada}

\email{yongkangli@mail.dhu.edu.cn}

\address{
Yaozhong Hu\\
Department of Mathematical and Statistical Sciences, University of Alberta at Edmonton, Edmonton, Canada, T6G 2G1
}

\email{yaozhong@ualberta.ca}

\address{
Litan Yan\\
Donghua University, 2999 North Renmin Rd., Songjiang, Shanghai 201620, P.R. China
}

\email{litan-yan@hotmail.com}

\subjclass[2020]{60H15, 60H07, 60G15, 60F05, 26A33}

\keywords{Space-time fractional stochastic partial differential equations; Malliavin calculus; Stein's method; central limit theorem.
}

\begin{abstract}  
	We study spatial averages of the mild solution to a one-dimensional space--time fractional stochastic heat equation driven by space--time white noise. For fixed \(t>0\), we prove a quantitative central limit theorem for the normalized spatial average over \([-R,R]\):
as \(R\to\infty\), its law converges to the standard normal law at
rate \(R^{-1/2}\) in total variation distance. The proof relies on the Malliavin--Stein method,  combined with precise estimates for the space--time fractional heat kernel and for the Malliavin derivative of the mild solution. We further establish a functional central limit theorem.
\end{abstract}

\date{\today}

\maketitle

\baselineskip = 14pt

\section{Introduction}
In this paper, we consider the following one-dimensional space-time fractional stochastic heat equation
\begin{equation}\label{eq:main equation}
	\left\{
	\begin{aligned}
		&\partial_t^\beta u(t,x)=-\nu (-\Delta)^{\frac{\alpha}{2}} u(t,x)+I_t^{1-\beta}\left[\sigma(u(t,x))\dot{W}(t,x)\right],\quad t\geq0,x\in\mathbb{R}\\
		&u_0(x)=1, x\in\R
	\end{aligned}
	\right.
\end{equation}
where \(\dot W\) denotes space--time white noise, \(\nu>0\),
\(\beta\in(0,1)\), and \(\alpha\in(1,2]\).
The coefficient \(\sigma\) is assumed to be Lipschitz continuous; that is,
there exists a constant \(L>0\) such that, for all \(x,y\in\mathbb R\),
\begin{equation}\label{eq:sigma-condition}
	|\sigma(x)-\sigma(y)|\le L|x-y|.
\end{equation}
Moreover, since the initial condition is \(u_0\equiv1\), we assume
\[
\sigma(1)\neq0,
\]
which excludes the trivial case and will be used to guarantee the
non-degeneracy of the limiting variance. 
The operator $ -(-\Delta)^{\frac{\alpha}{2}} $ denotes the fractional Laplacian operator, the generator of an isotropic stable process. The fractional differential operator is understood in the Caputo sense (see Caputo~\cite{caputo1967linear}). It is given by
\[\partial_t^\beta u(t,x)=\frac{1}{\Gamma(1-\beta)}\int_0^t\frac{\partial_ru(r,x)}{(t-r)^\beta}dr,\]
and $ I_t^{1-\beta} $ denotes the fractional integral
\[I_t^{1-\beta}u(t,x)=\frac{1}{\Gamma(1-\beta)}\int_0^t(t-r)^{-\beta}u(r,x)dr,\quad\beta\in(0,1).\]

Spatial averages of random-field solutions to stochastic partial differential equations (SPDEs) provide a natural means of describing their macroscopic fluctuation behavior. When the initial condition and the driving noise are spatially homogeneous, the solution is typically stationary in space at fixed time. Accordingly, for a growing domain \(B_R\), one is led to study \[ \int_{B_R} \bigl( u(t,x)-\mathbb E[u(t,x)] \bigr)\,dx, \] whose first-order behavior is related to spatial ergodicity, while its second-order fluctuations are described by central limit theorems and functional limit theorems.

Quantitative central limit theorems for spatial averages of stochastic heat equations were initiated by Huang \emph{et al.}~\cite{huang2020} for the one-dimensional equation driven by space--time white noise. By combining Malliavin calculus with Stein's method, they obtained an explicit normal-approximation bound in total variation
distance. This approach was subsequently extended to spatially colored Gaussian noise by Huang \emph{et al.}~\cite{huang2020color}, where both the normalization and the rate depend on the spatial covariance structure. For stochastic fractional heat equations involving a fractional Laplacian, Assaad \emph{et al.}~ \cite{assaad2022fra} established quantitative normal approximations under several classes of Gaussian multiplicative noise, including space--time white noise, Riesz-type covariance, and bounded integrable spatial covariance.

Related limit theorems have also been developed for occupation fields, more general nonlinear functionals, and singular initial data. Chen \emph{et al.}~\cite{chen2022central} proved functional central limit theorems for occupation fields of nonlinear stochastic heat equations with spatially homogeneous Gaussian noise. In a subsequent work, Chen \emph{et al.}~ \cite{chen2023central} obtained quantitative and functional central limit theorems for spatial averages of \(g(u)\), allowing both globally Lipschitz functions and certain locally Lipschitz functions. For the parabolic Anderson model with delta initial condition, Chen \emph{et al.}~ \cite{chen2022spatial} introduced a renormalized stationary field and proved spatial ergodicity and quantitative central limit theorems. These results were extended to higher dimensions and more general spatial covariance structures by Khoshnevisan \emph{et al.}~\cite{khoshnevisan2021spatial}, while Zhang \emph{et al.}~\cite{zhang2025functional} established corresponding functional central limit theorems.

The fluctuation behavior is also sensitive to the temporal structure and the distributional nature of the noise. Nualart \emph{et al.}~ \cite{nualart2021spatial,nualart2022quantitative} studied the parabolic Anderson model driven by Gaussian noise colored in both time and space. Since temporal correlation destroys the martingale structure of the time-white setting, their arguments rely on Wiener chaos expansions, Feynman--Kac representations, fourth-moment methods, and modified Gaussian Poincar\'e inequalities. For time-independent noise, Balan and Yuan~ \cite{balan2023central} obtained quantitative and functional central limit theorems covering integrable covariance functions, Riesz kernels, and one-dimensional rough fractional noise. Beyond Gaussian models, Dhoyer and Tudor~\cite{dhoyer2022spatial} derived a non-Gaussian Rosenblatt limit for a stochastic heat equation driven by Rosenblatt noise, whereas Li and Pu~ \cite{li2023gaussian} proved convergence of spatial averages of the density of one-dimensional super-Brownian motion to a Brownian sheet.

Further extensions include stochastic heat equations on bounded domains and over time-dependent averaging regions, as studied respectively by Pu~ \cite{pu2022gaussian} and Kim and Yi~\cite{kim2022limit}. Quantitative and functional central limit theorems have also been obtained for stochastic pseudo-partial differential equations with fractional noise by Liu and Shen~ \cite{liu2023gaussian}, and for nonlinear stochastic heat equations with an additional drift term by Balan and Salins~\cite{balan2025gaussian2}. Related Gaussian fluctuation results for stochastic wave equations can be found in \cite{bolanos2021averaging,nualart2022central,balan2022hyperbolic, balan2025gaussian} and the references therein.

Despite this extensive literature, the space--time fractional stochastic heat equation considered here presents a distinct difficulty. Its fundamental solution has a time-fractional scaling structure and, unlike the classical heat kernel or the stable heat kernel, does not satisfy the convolution semigroup identity 
\[ G_{t+s}=G_t\ast G_s, \qquad t,s>0. 
\] 
Consequently, several standard arguments used in the analysis of spatial averages cannot be applied directly. The key point of the present work is to replace the missing semigroup identity by estimates obtained through comparison with the stable heat kernel. This yields a quasi-semigroup bound sufficient for the covariance analysis and for the estimate of the Malliavin derivative.

Our proofs are based on the Malliavin--Stein method; see, for instance,
Nourdin and Peccati~\cite{nourdin2012normal}. If \(F=\delta(v)\) is a centered random variable with unit variance, then \[ d_{\mathrm{TV}}(F,Z) \leq 2\sqrt{ \operatorname{Var} \left( \langle DF,v\rangle_{\mathcal H} \right) }, \] where \(Z\sim N(0,1)\). Thus, quantitative normal approximation is reduced to estimating the variance of a Malliavin inner product. In the present setting, this requires precise covariance estimates for the nonlinear coefficient and sharp bounds for the Malliavin derivative of the solution.

The main contributions of this paper are as follows. First, we identify the asymptotic covariance structure of the centered spatial averages, which determines both the fluctuation scale and the covariance of the limiting process. Second, for each fixed \(t>0\), we prove a quantitative central limit theorem in total variation distance with rate \(R^{-1/2}\). Third, we establish a functional central limit theorem in \(C([0,T])\), with limiting process \[ \left( \int_0^t\sqrt{2\xi(s)}\,dB_s \right)_{t\in[0,T]}, \qquad \xi(s)=\mathbb E[\sigma(u(s,0))^2]. \]

Following Walsh's stochastic integration framework~\cite{Walsh1986}
and Dalang's extension~\cite{dalang1999extending}, we use the following notion
of a mild solution.
\begin{definition}
	Let $(\Omega, \mathcal{F}, \{\mathcal{F}_t\}_{t\geq 0}, \mathbb{P})$ be a filtered probability space satisfying the usual conditions, and let $W(ds, dy)$ be a space-time white noise defined on this space.  An adapted and jointly measurable random field
	\[
	\{u(t,x):(t,x)\in[0,T]\times\mathbb R\}
	\]
	is called a mild solution to \eqref{eq:main equation} if
	\[
	\sup_{(t,x)\in[0,T]\times\mathbb R}
	\mathbb E|u(t,x)|^2<\infty,
	\]
	and, for every
	\((t,x)\in[0,T]\times\mathbb R\),
	\[
	\int_0^t\int_{\mathbb R}
	G_{t-s}(x-y)^2
	\mathbb E\left[|\sigma(u(s,y))|^2\right]\,dy\,ds
	<\infty,
	\]
	so that the Walsh integral below is well defined in \(L^2(\Omega)\). Moreover,
	for every \((t,x)\in[0,T]\times\mathbb R\),
	\begin{equation}\label{eq:solution}
		u(t,x)
		=
		1+
		\int_0^t\int_{\mathbb R}
		G_{t-s}(x-y)\sigma(u(s,y))\,W(ds,dy)
	\end{equation}
	holds in \(L^2(\Omega)\). 
\end{definition}

Under the Lipschitz condition \eqref{eq:sigma-condition}, the existence and
uniqueness of the mild solution follow from the Picard iteration argument; see
Mijena and Nane~\cite{mijena2015space}. Moreover, for every \(p\ge2\) and
\(T>0\), there exists a constant \(C_{p,T}>0\) depending on $p$ and $T$ such that
\begin{equation}\label{eq:solution-m-bound}
	\sup_{(t,x)\in[0,T]\times\mathbb R}
	\mathbb E|u(t,x)|^p
	\le
	C_{p,T}.
\end{equation}
Since the initial condition is constant and the kernel \(G_t\) is translation-invariant in space, the solution is spatially stationary.

 In this paper, we are more interested in the asymptotic behavior as $ R $ tends to infinity of the normalized quantity
\begin{equation}\label{eq:spatial averager quantity}
	F_R(t):=\frac{1}{\sigma_R}\left(\int_{-R}^R u(t,x)dx-\mathbb{E}\left[\int_{-R}^R u(t,x)dx\right]\right),
\end{equation}
with $ R>0 $ and 
\begin{equation}\label{eq:sigma_R}
	\sigma_R^2:=\mathrm{Var}\left(\int_{-R}^Ru(t,x)dx\right) .
\end{equation}

A key step is to identify the asymptotic behavior of the
normalizing constant. More precisely, in Proposition \ref{pro:cov_GR}, we will prove that
\begin{equation}\label{eq:est_sigma_R}
	\frac{\sigma_R^2}{R}
	\longrightarrow
	2\int_0^t\xi(r)\,dr,
	\qquad R\to\infty,
\end{equation}
where $\xi(r)$ denotes $\mathbb{E}[\sigma(u(r,x))^2]$.
Equivalently,
\[
\sigma_R^2
\sim
2R\int_0^t\xi(r)\,dr.
\]

We now state the two main results. The first is a fixed-time quantitative
central limit theorem with an explicit total variation rate.
\begin{theorem}\label{QCLT}
	Under the assumptions \eqref{hyp:main}, let \(d_{\mathrm{TV}}\) denote the total variation distance and let
	\(Z\sim N(0,1)\). Then, for every fixed \(t>0\), there exists a positive
	constant \(C\), independent of $R$, such that
	\[
	d_{\mathrm{TV}}(F_R(t),Z)\le \frac{C}{\sqrt R},\qquad R>0.
	\]
\end{theorem}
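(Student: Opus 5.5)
We follow the Malliavin--Stein scheme of Huang et al.~\cite{huang2020}, adapted to the kernel $G$ of the space--time fractional equation. First we write the centred spatial average as a divergence. Integrating the mild formulation \eqref{eq:solution} over $[-R,R]$ and using the stochastic Fubini theorem gives
\[
F_R(t)=\delta(v_R),\qquad v_R(s,y)=\frac{1}{\sigma_R}\mathbf 1_{[0,t]}(s)\,\varphi_R(s,y)\,\sigma(u(s,y)),\qquad \varphi_R(s,y)=\int_{-R}^R G_{t-s}(x-y)\,dx .
\]
Here $v_R$ is adapted, so the divergence coincides with the Walsh integral. Since $\mathbb E[F_R^2]=1$ by the definition \eqref{eq:sigma_R}, the bound $d_{\mathrm{TV}}(F_R,Z)\le 2\sqrt{\mathrm{Var}\langle DF_R,v_R\rangle_{\mathcal H}}$ reduces the theorem to showing that $\mathrm{Var}\langle DF_R,v_R\rangle_{\mathcal H}\le C/R$. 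For the normalisation we use Proposition~\ref{pro:cov_GR}, which gives $\sigma_R^2\ge cR$ for $R\ge R_0$. This uses $\sigma(1)\neq0$, which makes $\int_0^t\xi(r)\,dr>0$ because $\xi$ is continuous and $\xi(0)=\sigma(1)^2$. For $R<R_0$ the bound is trivial, since $d_{\mathrm{TV}}\le 1$.

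Next we compute the derivative. We have $D_{s,y}F_R=\sigma_R^{-1}\int_{-R}^R D_{s,y}u(t,x)\,dx$, where
\[
D_{s,y}u(t,x)=G_{t-s}(x-y)\sigma(u(s,y))+\int_s^t\!\!\int_{\mathbb R}G_{t-r}(x-z)\Sigma(r,z)D_{s,y}u(r,z)\,W(dr,dz),
\]
and $\Sigma$ is an adapted process bounded by $L$. The key estimate to establish is
\[
\|D_{s,y}u(t,x)\|_p\le C_{p,T}\,G^*_{t-s}(x-y),
\]
where $G^*$ is a kernel that is integrable in space, uniformly in time; the natural candidate is a time-integrated stable density. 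To obtain this bound we iterate the equation with the Burkholder--Davis--Gundy inequality. This requires controlling integrals of the form $\int_s^t\int G_{t-r}(x-z)^2G^*_{r-s}(z-y)^2\,dz\,dr$. Here the missing semigroup property is the obstacle: $G_{t-r}*G_{r-s}\neq G_{t-s}$. We plan to use the subordination representation $G_t(x)=\int_0^\infty p_{\tau}(x)\,g_t(\tau)\,d\tau$, with $p$ the stable kernel, to obtain a quasi-semigroup bound $\int G_{t-r}(x-z)\,G_{r-s}(z-y)\,dz\le C\,G^*_{t-s}(x-y)$ with a slightly larger kernel $G^*$. We also need the pointwise bound $G_t(x)^2\le C t^{-\beta/\alpha}G_t(x)$, whose power is integrable because $\alpha>1$. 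Together these should close a Gronwall-type iteration. This is the step we expect to be hardest, and it is where the comparison with the stable heat kernel mentioned in the introduction enters.

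Finally, we write
\[
\langle DF_R,v_R\rangle_{\mathcal H}=\frac{1}{\sigma_R^2}\int_0^t\!\!\int\varphi_R(s,y)\sigma(u(s,y))\int_{-R}^R D_{s,y}u(t,x)\,dx\,dy\,ds .
\]
We bound its variance through Minkowski's inequality and the Clark--Ocone/Poincaré-type covariance estimate $|\Cov(X,Y)|\le\int_0^t\!\int\|D_{r,z}X\|_2\|D_{r,z}Y\|_2\,dz\,dr$. Applied to products of $\sigma(u(s,y))$ and $D_{s,y}u(t,x)$, together with the derivative bound above, this yields a bound by
\[
\frac{C}{R^2}\int\!\!\int\!\!\int\!\!\int\!\!\int \mathbf 1_{[-R,R]^4}(x_1,x_1',x_2,x_2')\,G^*(x_1-y_1)\,G^*(x_1'-y_1)\,G^*(x_2-y_2)\,G^*(x_2'-y_2)\,\Phi(y_1-y_2)\cdots
\]
Here $\Phi$ is an integrable kernel coming from $\int G^*(y_1-z)G^*(y_2-z)\,dz$. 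Integrating out the spatial variables using $\int G^*\le C$ leaves a single free variable ranging over $[-R,R]$. This gives the bound $CR/R^2=C/R$, and taking the square root yields the rate $R^{-1/2}$.
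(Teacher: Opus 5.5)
Your setup (divergence representation, the Malliavin--Stein bound, and $\sigma_R^2\ge cR$ for large $R$) matches the paper. The final variance step, however, has a genuine gap.

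\textbf{The gap.} You apply the Poincar\'e-type inequality \eqref{eq:Poincare-type-ineq} to the products $\sigma(u(s,y))\int_{-R}^R D_{s,y}u(t,x)\,dx$. By the product rule this needs $D_{r,z}D_{s,y}u(t,x)$, i.e.\ second Malliavin derivatives of $u$.
\begin{itemize}
\item Under \eqref{hyp:main}, $\sigma$ is only Lipschitz. So $\Sigma$ in \eqref{eq:Malliavin-derivative} is merely a bounded predictable field, and $D_{s,y}u(t,x)$ need not belong to $\mathbb D^{1,2}$; the inequality then cannot even be applied.
\item Even for smooth $\sigma$, you would need a separate moment bound on the second derivative. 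Your first-order estimate does not supply it.
\end{itemize}
Your displayed heuristic integral treats $\|D_{s,y}u(t,x)\|$ as if it were a deterministic kernel $G^*(x-y)$. It thereby silently drops the terms $\sigma(u(s,y))\,D_{r,z}D_{s,y}u(t,x)$.

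\textbf{How the paper avoids this.} It first substitutes \eqref{eq:Malliavin-derivative} into $\langle DF_R,v_R\rangle_{\mathcal H}$, which splits it into two pieces.
\begin{itemize}
\item The first piece is $\sigma_R^{-2}\int_0^t\int_{\mathbb R}\varphi_{t-s,R}(y)^2\sigma^2(u(s,y))\,dy\,ds$. Its fluctuation ($A_1$) is controlled by Poincar\'e applied only to $\sigma^2(u(s,y))$, using $D\sigma^2(u)=2\sigma(u)\Sigma\, Du$.
\item The second piece is the stochastic-integral term. For fixed $s$, $\sigma(u(s,y))$ is $\mathcal F_s$-measurable, so it can be moved inside the Walsh integral over $[s,t]$. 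It\^o's isometry then gives
\[
\int_s^t\int_{\mathbb R}\varphi_{t-r,R}(z)^2\,\mathbb E\Bigl[\Sigma^2(r,z)\Bigl(\int_{\mathbb R}\varphi_{t-s,R}(y)\sigma(u(s,y))D_{s,y}u(r,z)\,dy\Bigr)^2\Bigr]\,dz\,dr,
\]
which involves only first derivatives. This is $A_2$.
\end{itemize}
Both pieces are then $O(R^{-1/2})$ via the first-order bound. This decomposition is the missing idea.

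\textbf{On your derivative estimate.} This part is underspecified rather than wrong.
\begin{itemize}
\item With the two tools you list (quasi-semigroup and $G_t^2\le Ct^{-\beta/\alpha}G_t$), the iteration closes for $\|D_{s,y}u\|_p^2$ against $G$, not against $G^2$. That yields Lemma~\ref{le:malliavin}: $\|D_{s,y}u(r,z)\|_p\le C(r-s)^{-\beta/(2\alpha)}G_{r-s}(z-y)^{1/2}$.
\item This is exactly the $G^*$ you want. It is integrable in space uniformly in time because $\|G_\tau^{1/2}\|_{L^1}\le C\tau^{\beta/(2\alpha)}$, which is where $\alpha>1$ is used.
\item The quasi-semigroup bound must have $G_{t+s}$ itself on the right, not a larger kernel (Lemma~\ref{lem:quasi-semigroup-G}). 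It follows from $G_\tau\asymp p_\alpha(\tau^\beta,\cdot)$ together with $t^\beta+s^\beta\asymp(t+s)^\beta$. With a strictly larger kernel $G^*$, the Volterra iteration would not reproduce itself.
\end{itemize}
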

We also provide a functional version of Theorem \ref{QCLT}.
\begin{theorem}\label{FCLT}
	Under the assumptions \eqref{hyp:main}, let $\{u(t,x), (t,x)\in[0,T]\times\R\}$ be the mild solution to the SPDE~\eqref{eq:main equation} given by \eqref{eq:solution}. Denote
	\[
	\xi(s)=\mathbb{E}\left[\sigma(u(s,y))^2\right], \qquad s\geq 0.
	\]
	Then, for any $T>0$,
	\[
	\left(
	\frac{1}{\sqrt{R}}
	\left(
	\int_{-R}^{R} u(t,x)\,dx - 2R
	\right)
	\right)_{t\in[0,T]}
	\longrightarrow
	\left(
	\int_0^t \sqrt{2\xi(s)}\,dB_s
	\right)_{t\in[0,T]},
	\]
	 as $R$ tends to infinity, where $B$ is a standard Brownian motion and the convergence is in law on the space of continuous functions $C([0,T])$.
\end{theorem}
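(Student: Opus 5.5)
The proof of Theorem \ref{FCLT} splits into the two standard parts: convergence of finite-dimensional distributions, and tightness in $C([0,T])$. Since $\mathbb E[u(t,x)]=1$, we have $\int_{-R}^R u(t,x)\,dx-2R=\int_0^t\int_{\mathbb R}\varphi_{t,R}(s,y)\sigma(u(s,y))\,W(ds,dy)$, where
\[
\varphi_{t,R}(s,y)=\int_{-R}^R G_{t-s}(x-y)\,dx .
\]
Stochastic Fubini gives this identity, and it writes the quantity as a divergence $\delta(v_{t,R})$ with $v_{t,R}(s,y)=R^{-1/2}\varphi_{t,R}(s,y)\sigma(u(s,y))\mathbf 1_{[0,t]}(s)$.

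\textbf{Covariance.} Write $G_{t,R}:=R^{-1/2}\big(\int_{-R}^R u(t,x)\,dx-2R\big)$. By the Walsh isometry,
\[
\mathbb E[G_{t_1,R}G_{t_2,R}]=\frac1R\int_0^{t_1\wedge t_2}\!\!\int_{\mathbb R}\varphi_{t_1,R}\varphi_{t_2,R}\,\mathbb E[\sigma(u(s,y))^2]\,dy\,ds .
\]
By stationarity, $\mathbb E[\sigma(u(s,y))^2]=\xi(s)$. Since $\int G_{t-s}=1$, we have $\varphi_{t,R}(s,y)\to 1$ for $|y|<R$ away from the boundary, and the boundary layer contributes $O(1)$ in $y$. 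Hence $\frac1R\int_{\mathbb R}\varphi_{t_1,R}\varphi_{t_2,R}\,dy\to 2$ for each $s<t_1\wedge t_2$. The limit therefore equals $2\int_0^{t_1\wedge t_2}\xi(s)\,ds$, which is the covariance of $\int_0^t\sqrt{2\xi}\,dB$. This is the content of Proposition \ref{pro:cov_GR} extended to two times, and I expect that proposition to give it directly. For the finite-dimensional laws, I would use the multivariate Malliavin--Stein bound (Nourdin--Peccati):
\[
d\big((G_{t_1,R},\dots,G_{t_m,R}),\mathcal N(0,C)\big)\le C'\sum_{i,j}\sqrt{\mathrm{Var}\,\langle DG_{t_i,R},v_{t_j,R}\rangle_{\mathcal H}}+C'\sum_{i,j}\big|\mathbb E[G_{t_i,R}G_{t_j,R}]-C_{ij}\big| .
\]
The variance terms are bounded by $C/R$ by exactly the computation used in Theorem \ref{QCLT}, now with two possibly different times. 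That computation uses the Clark--Ocone/Poincaré-type splitting of $\langle DG,v\rangle$, the bound $\|D_{r,z}u(s,y)\|_p\le C G_{s-r}(y-z)$, and the quasi-semigroup comparison with the stable kernel. Nothing in it uses $t_i=t_j$, so it carries over.

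\textbf{Tightness.} By Kolmogorov's criterion it suffices to show $\mathbb E|G_{t,R}-G_{s,R}|^p\le C|t-s|^{\gamma}$ for some $p$ with $\gamma p'>1$, uniformly in $R\ge1$, and with $G_{0,R}=0$. Write
\[
G_{t,R}-G_{s,R}=R^{-1/2}\int_0^s\!\!\int(\varphi_{t,R}-\varphi_{s,R})\sigma(u)\,dW+R^{-1/2}\int_s^t\!\!\int\varphi_{t,R}\sigma(u)\,dW .
\]
The second term is handled by BDG and Minkowski together with \eqref{eq:solution-m-bound}: since $0\le\varphi\le1$ and $\int\varphi_{t,R}^2\,dy\le\int\varphi_{t,R}\,dy=2R$, its $p$-th moment is $\le C|t-s|^{p/2}$. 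The first term gives
\[
\Big(R^{-1}\int_0^s\!\!\int(\varphi_{t,R}-\varphi_{s,R})^2\,dy\,dr\Big)^{p/2}
\]
after BDG. Using $|\varphi_{t,R}-\varphi_{s,R}|\le 1$ and $\int(\varphi_{t,R}-\varphi_{s,R})\,dy=0$, one can bound $\int|\varphi_{t,R}-\varphi_{s,R}|\,dy\le \int_{\mathbb R}\big(|\mathbf 1_{[-R,R]}*(G_{t-r}-G_{s-r})|\big)\,dy\le 2\|G_{t-r}-G_{s-r}\|_{L^1}$. It then remains to show $\int_0^s\|G_{t-r}-G_{s-r}\|_{L^1}\,dr\le C|t-s|^{\gamma}$.

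\textbf{Main obstacle.} This last $L^1$-time-regularity estimate for the space--time fractional kernel is the hard step, because $G$ is not a semigroup. I would use the subordination representation $G_t(x)=\int_0^\infty p_{\tau}(x)\,f_{t}(\tau)\,d\tau$. Here $p_\tau$ is the $\alpha$-stable density, and $f_t$ is the density of the inverse $\beta$-stable subordinator, which has the scaling $f_t(\tau)=t^{-\beta}f_1(\tau t^{-\beta})$. This gives $\|G_t-G_s\|_{L^1}\le\|f_t-f_s\|_{L^1(d\tau)}$. Scaling then reduces the difference to $|1-(s/t)^{\beta}|$ times a constant, which is $\le C\big((t-s)/(t-r')\big)^{\beta}$ after shifting. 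Integrating $(t-s)^{\beta}(s-r)^{-\beta}$ over $r$ gives $C|t-s|^{\beta}$, since $\beta<1$. Taking $p$ large with $p\beta/2>1$ yields tightness, and together with the finite-dimensional convergence this proves Theorem \ref{FCLT}.
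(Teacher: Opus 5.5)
Your proposal is correct. The finite-dimensional part and the $\int_s^t$ piece of the increment match the paper; the tightness estimate for the $\int_0^s$ piece takes a genuinely different route.

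\textbf{What coincides with the paper.} You use the divergence representation, the multivariate bound of Proposition~\ref{pro:con-for-finite-distri}, and duality plus Proposition~\ref{pro:cov_GR} for the means. The variances are handled by rerunning the $A_1$/$A_2$ computation with two times. One correction: the derivative bound behind Theorem~\ref{QCLT} is $C(s-r)^{-\beta/(2\alpha)}G_{s-r}(y-z)^{1/2}$ from Lemma~\ref{le:malliavin}, not $CG_{s-r}(y-z)$. The computation closes with either bound, so nothing breaks.

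\textbf{The paper's tightness route.} It stays in $L^2$ and uses Plancherel. Since $x\mapsto E_\beta(-x)$ is decreasing with values in $[0,1]$, the quantity $\int_0^s|\widehat G_{t-r}(\xi)-\widehat G_{s-r}(\xi)|^2dr$ is dominated by a telescoping difference of integrals of $E_\beta(-\nu|\xi|^\alpha\tau^\beta)$. That difference is at most $t-s$, uniformly in $\xi$. This gives $\int_0^s\|\varphi_{t-r,R}-\varphi_{s-r,R}\|_{L^2}^2dr\le CR(t-s)$, hence moments of order $C(t-s)^{p/2}$, with no regularity input on any density.

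\textbf{Your tightness route.} You pass to $L^1$ via $(\varphi_{t-r,R}-\varphi_{s-r,R})^2\le|\varphi_{t-r,R}-\varphi_{s-r,R}|$ and Young's inequality. The constant there is $\|\mathbf 1_{[-R,R]}\|_{L^1}=2R$, not $2$; your prefactor $R^{-1}$ cancels it. You then need $L^1$-continuity in time of $G$, which you get from subordination and $E_t\overset{d}{=}t^\beta E_1$.

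This is sound, but one step needs justification. The bound $\|f_1-f_\lambda\|_{L^1}\le C|1-\lambda|$, where $f_\lambda$ is the density of $\lambda E_1$, is a regularity fact. It follows, for example, from $\int_0^\infty\tau|f_1'(\tau)|\,d\tau<\infty$ for the Mittag-Leffler (M-Wright) density $f_1$. This holds, but you should cite or prove it.

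\textbf{Comparison.} Your route gives only $(t-s)^{\beta p/2}$, or $(t-s)\log\frac{t}{t-s}$ if you use $1-x^\beta\le1-x$. That is still enough for Kolmogorov once $p>2/\beta$. Its advantage is that it needs only $L^1$-continuity in time of the kernel. The paper's Fourier argument is shorter and sharper, and relies only on the monotonicity and boundedness of $E_\beta$.
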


The rest of the paper is organized as follows. In Section~\ref{2}, we recall
some preliminaries on space--time white noise, the space--time fractional heat
kernel, Malliavin calculus and Stein's method. In Section~\ref{3}, we study the
asymptotic behavior of the covariance of the spatial averages of the mild
solution. Section~\ref{4} is devoted to the proof of the quantitative central
limit theorem, namely Theorem~\ref{QCLT}, while Section~\ref{5} is devoted to
the proof of the functional central limit theorem, namely Theorem~\ref{FCLT}.
Finally, in Appendix~\ref{6}, we provide the \(L^p\)-estimate, for \(p\ge2\),
of the Malliavin derivative of the solution to SPDE~\eqref{eq:main equation}.

\section{Preliminaries}\label{2}
In this section, we collect several preliminary facts that will be used
throughout the paper. We first recall the notation for space--time white
noise and the associated Walsh stochastic integral. We then summarize some
properties of the space--time fractional heat kernel. Finally, we recall the
basic elements of Malliavin calculus and the Malliavin--Stein bound used in the
proof of the quantitative central limit theorem. Throughout the paper, \(C\) denotes a generic positive constant whose value may
change from line to line. Unless otherwise specified, \(C\) may depend on the
fixed parameters of the model, but is independent of the spatial scale \(R\).

We first specify the meaning of the space--time white noise appearing in
\eqref{eq:main equation}. The noise \(\dot W\) is understood as a centered
Gaussian generalized random field on \(\mathbb R_+\times\mathbb R\) with formal
covariance
\[
\mathbb E[\dot W(t,x)\dot W(s,y)]
=
\delta_0(t-s)\delta_0(x-y),
\qquad t,s\ge0,\ x,y\in\mathbb R.
\]
More precisely, for deterministic functions
\(\varphi,\psi\in L^2(\mathbb R_+\times\mathbb R)\), set
\[
W(\varphi)
:=
\int_0^\infty\int_{\mathbb R}
\varphi(t,x)\,W(dt,dx).
\]
Then \(W\) is a centered Gaussian family satisfying
\[
\mathbb E[W(\varphi)W(\psi)]
=
\int_0^\infty\int_{\mathbb R}
\varphi(t,x)\psi(t,x)\,dx\,dt.
\] Throughout this paper, stochastic integrals with
respect to \(W\) are understood in the sense of Walsh. In particular, if
\(X=\{X(s,y):(s,y)\in[0,T]\times\mathbb R\}\) is an adapted and jointly
measurable random field satisfying
\[
\int_0^T\int_{\mathbb R}
\mathbb E|X(s,y)|^2\,dy\,ds<\infty,
\]
then
\[
\int_0^T\int_{\mathbb R}X(s,y)\,W(ds,dy)
\]
is well defined in \(L^2(\Omega)\), and the Itô isometry holds:
\[
\mathbb E\left[
\left(
\int_0^T\int_{\mathbb R}X(s,y)\,W(ds,dy)
\right)^2
\right]
=
\int_0^T\int_{\mathbb R}
\mathbb E|X(s,y)|^2\,dy\,ds.
\]
\subsection{Some properties of the space--time fractional heat kernel}
In this subsection, we collect several properties of the space--time fractional heat kernel that will be used throughout the paper. We first present the kernel in a general spatial
	dimension \(d\), so as to clarify the origin of the dimensional restriction.
	Afterwards, we specialize to the one-dimensional setting considered in this
	paper. 
	
	Let \(G_t(x)\), \(t>0\), \(x\in\mathbb R^d\), be the fundamental solution
	to the deterministic fractional heat equation
	\begin{equation}\label{eq:PDE}
		\begin{cases}
			\partial_t^\beta G_t(x)
			=
			-\nu(-\Delta)^{\alpha/2}G_t(x),
			& t>0,\ x\in\mathbb R^d,\\
			G_0(x)=\delta_0(x),&x\in\R^d.
		\end{cases}
	\end{equation}
	 Let $X=\{X(t), t\geq 0\}$ be an isotropic $\alpha$-stable L\'evy process in $\mathbb{R}^d$ with density function denoted by $p_\alpha(t,x)$ whose Fourier transform can be expressed as
 \[ \widehat p_\alpha(t,\xi)=e^{-\nu t|\xi|^\alpha}\qquad\xi\in\R^d. \]
  See Chen and Dalang\cite{chen2015moments} and Debbi and Dozzi\cite{debbi2005solutions} for some properties of $p_\alpha$. Let \[ E_t:=\inf\{r>0:D_r>t\} \] be the first passage time of a \(\beta\)-stable subordinator \(D=\{D_r:r\ge0\}\). (See Meerschaert and Scheffler \cite{meerschaert2004limit} for properties of the inverse stable subordinator.) It is known that \(G_t(x)\) is the density of the time-changed process \(X(E_t)\). Hence, by conditioning on \(E_t\), \begin{equation}\label{eq:G=integral} G_t(x) = \int_0^\infty p_\alpha(s,x)f_{E_t}(s)\,ds, 
\end{equation}
where 
\begin{equation}\label{eq:f-G-integral}
	f_{E_t}(s) = t\beta^{-1}s^{-1-1/\beta} g_\beta\left(ts^{-1/\beta}\right), \qquad s>0,
\end{equation}
is the density of $E_t$ and \(g_\beta\) is the density of \(D_1\). Equivalently, after the change of variables \(\tilde{s}=ts^{-1/\beta}\), one has \begin{equation}\label{eq:G-subordination-change} G_t(x) = \int_0^\infty p_\alpha\left((t/s)^\beta,x\right)g_\beta(s)\,ds. 
\end{equation}
We will also need the Fourier transform of $G_t(x)$:
\begin{equation}\label{eq:G-Fourier}
	\widehat{G}_t(\xi)=E_\beta(-\nu|\xi|^\alpha t^\beta),
\end{equation}
where 
\[E_\beta(x):=\sum_{k=0}^\infty \frac{x^k}{\Gamma(1+\beta k)}\]
denotes the Mittag--Leffler function, and \(\Gamma\) denotes the
Gamma function. The function \(x\mapsto E_\beta(-x)\) is completely
monotone on \([0,\infty)\). Moreover, the Mittag-Leffler function has the uniform estimate
\[
\frac{1}{1+\Gamma(1-\beta)x}
\leq
E_\beta(-x)
\leq
\frac{1}{1+\Gamma(1+\beta)^{-1}x},
\qquad x>0.
\]

In contrast to the stable heat kernel \(p_\alpha\), the family
\(\{G_t\}_{t>0}\) is not, in general, a convolution semigroup when
\(0<\beta<1\). More precisely,
\[
G_{t+s}\neq G_t*G_s,
\qquad s,t>0.
\]
Indeed, if \(\{G_t\}_{t>0}\) were a convolution semigroup, then
\[
\widehat G_{t+s}(\xi)
=
\widehat G_t(\xi)\widehat G_s(\xi).
\]
Using the Fourier representation \eqref{eq:G-Fourier}, this would imply
\[
E_\beta\left(-\nu|\xi|^\alpha(t+s)^\beta\right)
=
E_\beta\left(-\nu|\xi|^\alpha t^\beta\right)
E_\beta\left(-\nu|\xi|^\alpha s^\beta\right).
\]
Let \(a=\nu|\xi|^\alpha\). As \(a\downarrow0\), the expansion of the
Mittag--Leffler function gives
\[
E_\beta(-a(t+s)^\beta)
=
1-\frac{a(t+s)^\beta}{\Gamma(1+\beta)}
+O(a^2),
\]
whereas
\[
E_\beta(-at^\beta)E_\beta(-as^\beta)
=
1-\frac{a(t^\beta+s^\beta)}{\Gamma(1+\beta)}
+O(a^2).
\]
Since
\[
(t+s)^\beta\neq t^\beta+s^\beta
\]
in general for \(0<\beta<1\), the convolution semigroup property fails.

The following three estimates and properties for the kernel $G_t$ play a crucial role in our subsequent proofs.
\begin{lemma}[Lemma 1 in \cite{mijena2015space}]\label{le:G^2}
	For $ d<2\alpha $,
	\[\int_{\mathbb{R}^d}G_t^2(x)dx=C^*t^{-\beta d/\alpha},\]
	where $$ C^*=\frac{(\nu)^{-d/\alpha}2\pi^{d/2}}{\alpha\Gamma\left(\frac{d}{2}\right)}\frac{1}{(2\pi)^d}\int_0^\infty z^{d/\alpha-1}(E_\beta(-z))^2dz. $$
\end{lemma}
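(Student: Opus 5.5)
The plan is to compute the integral on the Fourier side with Plancherel and then use the explicit form \eqref{eq:G-Fourier} of \(\widehat G_t\). Assuming \(G_t\in L^1\cap L^2\) (it is a probability density, and square integrability will follow from the computation), with the convention \(\widehat f(\xi)=\int e^{i\xi\cdot x}f(x)\,dx\) we have
\[
\int_{\mathbb R^d}G_t^2(x)\,dx=\frac{1}{(2\pi)^d}\int_{\mathbb R^d}\bigl|\widehat G_t(\xi)\bigr|^2d\xi
=\frac{1}{(2\pi)^d}\int_{\mathbb R^d}\bigl(E_\beta(-\nu|\xi|^\alpha t^\beta)\bigr)^2d\xi .
\]
The integrand is radial, so I would pass to polar coordinates using the surface area \(2\pi^{d/2}/\Gamma(d/2)\) of the unit sphere. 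This gives
\[
\frac{1}{(2\pi)^d}\frac{2\pi^{d/2}}{\Gamma(d/2)}\int_0^\infty r^{d-1}\bigl(E_\beta(-\nu t^\beta r^\alpha)\bigr)^2dr .
\]

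Next I would substitute \(z=\nu t^\beta r^\alpha\). Then \(r=(z/(\nu t^\beta))^{1/\alpha}\) and \(dr=\alpha^{-1}(\nu t^\beta)^{-1/\alpha}z^{1/\alpha-1}dz\), so
\[
r^{d-1}dr=\frac{1}{\alpha}(\nu t^\beta)^{-d/\alpha}z^{d/\alpha-1}dz .
\]
This pulls out the factor \(\nu^{-d/\alpha}t^{-\beta d/\alpha}\alpha^{-1}\) and leaves exactly \(\int_0^\infty z^{d/\alpha-1}(E_\beta(-z))^2dz\). Collecting constants gives the stated \(C^*\) multiplied by \(t^{-\beta d/\alpha}\).

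The only real point is that this last integral is finite, and that is where \(d<2\alpha\) enters; it also justifies that \(G_t\in L^2\) in the first place. Near \(z=0\) we have \(E_\beta(-z)\le 1\), so the integrand is bounded by \(z^{d/\alpha-1}\), which is integrable since \(d/\alpha>0\). At infinity, the uniform upper bound \(E_\beta(-z)\le (1+z/\Gamma(1+\beta))^{-1}\) stated above shows the integrand is \(O(z^{d/\alpha-3})\). This is integrable at \(\infty\) exactly when \(d/\alpha<2\), i.e. \(d<2\alpha\). The lower bound \(E_\beta(-z)\ge(1+\Gamma(1-\beta)z)^{-1}\) shows the condition is also necessary, although we do not need that.

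Finiteness of \(\int|\widehat G_t|^2\) together with \(G_t\in L^1\) yields \(G_t\in L^2\) by Plancherel, so the identity holds. I expect no serious obstacle: the step needing care is the \(L^2\) justification and tracking the Fourier normalization constant \((2\pi)^{-d}\).
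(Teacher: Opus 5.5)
Your proposal is correct and follows the same route as Lemma~1 in \cite{mijena2015space}, which the paper cites rather than reproves: Plancherel with the $(2\pi)^{-d}$ normalization, polar coordinates, the substitution $z=\nu t^\beta r^\alpha$, and the upper Mittag--Leffler bound giving integrability of $z^{d/\alpha-3}$ at infinity exactly when $d<2\alpha$. Your check that $G_t\in L^1$ together with $\widehat G_t\in L^2$ forces $G_t\in L^2$ is a rigor point the cited proof leaves implicit, though it needs a short approximation argument (e.g.\ mollification and Fatou) rather than Plancherel alone.
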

\begin{lemma}[Lemma 2.1 in \cite{foondun2017asymptotic}]\label{le:G-sup}
	If \(\alpha>d\), then there exist two positive constants \(c_1,c_2\) such that, for all
	\(t>0\) and \(x\in\mathbb R^d\),
	\[
	c_1\left(t^{-\beta d/\alpha}\wedge \frac{t^\beta}{|x|^{d+\alpha}}\right)
	\leq G_t(x)\leq
	c_2\left(t^{-\beta d/\alpha}\wedge \frac{t^\beta}{|x|^{d+\alpha}}\right).
	\]
	Equivalently,
	\[
	G_t(x)\asymp t^{-\beta d/\alpha}\wedge \frac{t^\beta}{|x|^{d+\alpha}},
	\qquad t>0,\ x\in\mathbb R^d .
	\]
\end{lemma}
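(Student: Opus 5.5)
The plan is to prove the two-sided bound for \(G_t(x)\) from the subordination representation \eqref{eq:G-subordination-change},
\[
G_t(x)=\int_0^\infty p_\alpha\bigl((t/s)^\beta,x\bigr)g_\beta(s)\,ds ,
\]
combined with the classical two-sided estimate for the isotropic stable density. For \(\alpha\in(0,2)\) that estimate reads
\[
p_\alpha(r,x)\asymp r^{-d/\alpha}\wedge \frac{r}{|x|^{d+\alpha}}.
\]
Note that \(\alpha>d\) forces \(d=1\) and \(\alpha\in(1,2]\), so only the one-dimensional case is at stake.

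\textbf{Upper bound.} Insert the stable bound with \(r=(t/s)^\beta\). This gives
\[
G_t(x)\le C\min\Bigl(t^{-\beta d/\alpha}\,\mathbb E\bigl[D_1^{\beta d/\alpha}\bigr],\ \frac{t^\beta}{|x|^{d+\alpha}}\,\mathbb E\bigl[D_1^{-\beta}\bigr]\Bigr),
\]
using \(\min(a,b)\le a\) and \(\min(a,b)\le b\) inside the integral. Both moments are finite:
\begin{itemize}
\item \(\mathbb E[D_1^{\gamma}]<\infty\) for \(\gamma<\beta\), and here \(\beta d/\alpha<\beta\) precisely because \(\alpha>d\). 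This is where the hypothesis enters.
\item Negative moments of all orders of a one-sided stable variable are finite, since \(g_\beta\) decays super-exponentially at \(0\).
\end{itemize}

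\textbf{Lower bound.} Restrict the integral to \(s\in[1,2]\), where \(g_\beta\ge c>0\). There \((t/s)^\beta\) is comparable to \(t^\beta\), so
\[
G_t(x)\ge c\,p\text{-lower bound at } r\asymp t^\beta ,
\]
which yields \(c_1\bigl(t^{-\beta d/\alpha}\wedge t^\beta|x|^{-d-\alpha}\bigr)\).

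\textbf{The case \(\alpha=2\).} Here the stable density is Gaussian, so the polynomial tail \(t^\beta/|x|^{d+\alpha}\) does not come from \(p_2\) itself and this step is the main obstacle. For the upper bound I would still use
\[
e^{-|x|^2/(4r)}r^{-d/2}\le C\min\bigl(r^{-d/2},\ r/|x|^{d+2}\bigr).
\]
For the lower bound in the regime \(|x|\gg t^{\beta/2}\) the Gaussian is tiny unless \(s\) is small. I would therefore integrate over \(s\) near \((t/|x|^{2/\beta})\) and use the small-\(s\) behaviour of \(g_\beta\). That behaviour is stretched-exponential, so I expect the lower bound to hold only up to exponential factors. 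If that is confirmed, I would treat \(\alpha<2\) as the genuine content and quote \cite{foondun2017asymptotic} for the endpoint.

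Near the origin the estimate \(t^{-\beta d/\alpha}\) also follows from Fourier inversion of \eqref{eq:G-Fourier}: use the Mittag-Leffler upper bound together with integrability of \((1+z)^{-1}\) against \(z^{d/\alpha-1}\), which again holds since \(d<\alpha\). This gives a consistency check on the constant.
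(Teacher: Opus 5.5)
For $1<\alpha<2$ your argument is correct and complete. The paper gives no proof of its own, only the reference to \cite{foondun2017asymptotic}, and your subordination argument is the standard route to this estimate. Writing $G_t(x)=\mathbb E\bigl[p_\alpha((t/D_1)^\beta,x)\bigr]$, the upper bound needs exactly $\mathbb E[D_1^{\beta d/\alpha}]<\infty$, i.e.\ $d<\alpha$, together with $\mathbb E[D_1^{-\beta}]<\infty$. Restricting to $s\in[1,2]$ then gives the lower bound. Your upper-bound argument also survives at $\alpha=2$, because $r^{-1/2}e^{-x^2/(4\nu r)}\le C\min\bigl(r^{-1/2},\,r|x|^{-3}\bigr)$.

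The gap is in how you propose to handle the endpoint. At $\alpha=2$ the lower bound cannot be quoted from \cite{foondun2017asymptotic} or anywhere else, because it is false; your own heuristic already shows why. Take $t=1$, $|x|$ large, and split the subordination integral at $s_0=|x|^{-\gamma}$ with $0<\gamma<2/\beta$. For $s\ge s_0$ one has $r=s^{-\beta}\le |x|^{\beta\gamma}$ with $\beta\gamma<2$, so $p_2(r,x)\le C|x|^{-\beta\gamma/2}\exp\bigl(-|x|^{2-\beta\gamma}/(4\nu)\bigr)$. For $s<s_0$ one has $p_2(r,x)\le C$ and $\int_0^{s_0}g_\beta(s)\,ds\le C\exp\bigl(-c|x|^{\gamma\beta/(1-\beta)}\bigr)$. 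Hence $G_1(x)=O(|x|^{-N})$ for every $N$, which contradicts $G_1(x)\ge c_1|x|^{-3}$. In fact, at $\alpha=2$, $G_t$ is the M-Wright function, whose tail is of order $\exp\bigl(-c\,(|x|t^{-\beta/2})^{2/(2-\beta)}\bigr)$.

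So the right conclusion of your analysis is not ``prove $\alpha<2$, cite $\alpha=2$''. It is that the two-sided estimate holds for $1<\alpha<2$, while at $\alpha=2$ only the upper bound holds. The lemma, as stated under the paper's standing range $\alpha\in(1,2]$, must therefore be restricted to $\alpha<2$. The same endpoint problem affects Remark~\ref{rem:G-stable-comparison}. There the Gaussian $p_2$ does not obey the polynomial two-sided bound quoted. Also, $G_t\le C\,p_2(t^\beta,\cdot)$ fails, since $G_t$ has heavier-than-Gaussian tails.
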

The above two estimates impose different dimensional restrictions. 
Lemma~\ref{le:G^2} requires \(d<2\alpha\), while
Lemma~\ref{le:G-sup} requires the stronger condition \(\alpha>d\). Since
\(0<\alpha\le2\), the latter condition implies that the only possible positive
integer dimension is \(d=1\), together with \(\alpha>1\). 

Throughout the paper, we work under the following standing assumption:
\begin{equation}\label{hyp:main}
	\begin{cases}
		d = 1, \quad \nu > 0, \quad \beta \in (0,1), \quad \alpha \in (1,2], \\
		\sigma \text{ is globally Lipschitz continuous with } \sigma(1) \neq 0.
	\end{cases}
\end{equation}
Moreover, the stochastic convolution driven by space--time white noise requires
\[
\int_0^t\int_{\mathbb R^d}G_{t-s}(x-y)^2\,dy\,ds<\infty,
\]
which is equivalent to \(d<\alpha/\beta\). This condition is automatically
satisfied under \(d=1\), \(\alpha\in(1,2]\), and \(0<\beta<1\).
\begin{remark}\label{rem:G-stable-comparison}
	By Lemma~\ref{le:G-sup}, in the one-dimensional case, the space--time
	fractional heat kernel satisfies
	\[
	G_t(x)
	\asymp
	t^{-\beta/\alpha}
	\wedge
	\frac{t^\beta}{|x|^{1+\alpha}},
	\qquad t>0,\ x\in\mathbb R.
	\]
	On the other hand, it is well known that
	\[
	p_\alpha(t,x)
	\asymp
	t^{-1/\alpha}
	\wedge
	\frac{t}{|x|^{1+\alpha}},
	\qquad t>0,\ x\in\mathbb R.
	\]
	Hence we get
	\[
	p_\alpha(t^\beta,x)
	\asymp
	t^{-\beta/\alpha}
	\wedge
	\frac{t^\beta}{|x|^{1+\alpha}},
	\qquad t>0,\ x\in\mathbb R.
	\]
	Consequently, \(G_t\) is comparable to the stable heat kernel evaluated at
	the operational time \(t^\beta\); namely,
	\begin{equation}\label{eq:G-stable-comparison}
		G_t(x)\asymp p_\alpha(t^\beta,x),
		\qquad t>0,\ x\in\mathbb R.
	\end{equation}
\end{remark}

\begin{lemma}\label{lem:quasi-semigroup-G}
	Suppose that \(d=1\). There exists a constant \(C>0\) such that, for all
	\(t,s>0\) and \(h\in\mathbb R\),
	\begin{equation}\label{eq:quasi-semigroup-G}
		\int_{\mathbb R}G_t(h-\xi)G_s(\xi)\,d\xi
		\le
		CG_{t+s}(h).
	\end{equation}
\end{lemma}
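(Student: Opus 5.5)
The plan is to transfer the problem to the stable heat kernel through the comparison \eqref{eq:G-stable-comparison} in Remark~\ref{rem:G-stable-comparison}, use the exact semigroup property of \(p_\alpha\), and then return to \(G\) using the monotonicity and scaling of the two-sided bound.

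First, apply \eqref{eq:G-stable-comparison} to both factors:
\[
\int_{\mathbb R}G_t(h-\xi)G_s(\xi)\,d\xi\le C\int_{\mathbb R}p_\alpha(t^\beta,h-\xi)\,p_\alpha(s^\beta,\xi)\,d\xi .
\]
Since \(\{p_\alpha(r,\cdot)\}_{r>0}\) is a convolution semigroup (its Fourier transform is \(e^{-\nu r|\xi|^\alpha}\)), the right-hand side equals \(C\,p_\alpha(t^\beta+s^\beta,h)\). Applying \eqref{eq:G-stable-comparison} again in reverse form gives \(G_{t+s}(h)\ge c\,p_\alpha((t+s)^\beta,h)\). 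It therefore remains to show
\[
p_\alpha(t^\beta+s^\beta,h)\le C\,p_\alpha((t+s)^\beta,h).
\]

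For this last step, set \(r_1=t^\beta+s^\beta\) and \(r_2=(t+s)^\beta\). Because \(0<\beta<1\), subadditivity and concavity give
\[
r_2\le r_1\le 2^{1-\beta}r_2 ,
\]
so \(r_1\) and \(r_2\) are comparable up to the factor \(2\). By the two-sided bound \(p_\alpha(r,x)\asymp r^{-1/\alpha}\wedge r|x|^{-1-\alpha}\), we have
\[
p_\alpha(r_1,h)\le C\left(r_1^{-1/\alpha}\wedge r_1|h|^{-1-\alpha}\right)\le C\left(r_2^{-1/\alpha}\wedge 2r_2|h|^{-1-\alpha}\right)\le C'p_\alpha(r_2,h).
\]
The middle inequality uses \(r_1^{-1/\alpha}\le r_2^{-1/\alpha}\) and \(r_1\le 2r_2\). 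The same argument can be run directly on the expressions \(t^{-\beta/\alpha}\wedge t^\beta|x|^{-1-\alpha}\) from Lemma~\ref{le:G-sup}, so one never needs the exact form of \(p_\alpha\).

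The main obstacle is the mismatch between the operational times \(t^\beta+s^\beta\) and \((t+s)^\beta\). This is precisely where the semigroup property of \(G\) fails. What rescues the estimate is that the two-sided bound is stable under changing the time parameter by a bounded factor. I therefore expect the real content to be verifying that comparability carefully in both regimes, \(|h|\lesssim r^{1/\alpha}\) and \(|h|\gtrsim r^{1/\alpha}\). Everything else follows from the semigroup identity for \(p_\alpha\) and the comparison in Remark~\ref{rem:G-stable-comparison}.
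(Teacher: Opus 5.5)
Your proposal is correct and is essentially the paper's own proof. Both dominate $G_t$ by $p_\alpha(t^\beta,\cdot)$ and use the semigroup property of $p_\alpha$. Both then compare $t^\beta+s^\beta$ with $(t+s)^\beta$ via $(t+s)^\beta\le t^\beta+s^\beta\le 2^{1-\beta}(t+s)^\beta$ and the two-sided stable-kernel bound, and return to $G_{t+s}$ through the reverse comparison. Your explicit verification of $p_\alpha(r_1,h)\le C'p_\alpha(r_2,h)$ fills in a step the paper only asserts.
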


\begin{proof}
	By the two-sided comparison estimate \eqref{eq:G-stable-comparison}, there exists a
	constant \(C>0\) such that
	\[
	G_t(x)\le Cp_\alpha(t^\beta,x),
	\qquad t>0,\ x\in\mathbb R.
	\]
	Therefore, using the semigroup property of the stable heat kernel
	\(p_\alpha\), we obtain
	\[
	\begin{aligned}
		\int_{\mathbb R}G_t(h-\xi)G_s(\xi)\,d\xi
		&\le
		C\int_{\mathbb R}
		p_\alpha(t^\beta,h-\xi)p_\alpha(s^\beta,\xi)\,d\xi  \\
		&=
		Cp_\alpha(t^\beta+s^\beta,h).
	\end{aligned}
	\]
	Since \(0<\beta<1\), we have
	\[
	(t+s)^\beta
	\le
	t^\beta+s^\beta
	\le
	2^{1-\beta}(t+s)^\beta.
	\]
	Hence \(t^\beta+s^\beta\asymp (t+s)^\beta\). By the standard two-sided
	estimate for \(p_\alpha\), it follows that
	\[
	p_\alpha(t^\beta+s^\beta,h)
	\le
	Cp_\alpha((t+s)^\beta,h).
	\]
	Using the reverse inequality in
\eqref{eq:G-stable-comparison}, namely
\[
p_\alpha((t+s)^\beta,h)\leq CG_{t+s}(h),
\]
we obtain
	\[
	\int_{\mathbb R}G_t(h-\xi)G_s(\xi)\,d\xi
	\le
	CG_{t+s}(h).
	\]
	The proof is complete.
\end{proof}
\begin{corollary}\label{cor:G2G}
	Suppose that \(d=1\). There exists a constant \(C>0\) such that, for all
	\(t,s>0\) and \(h\in\mathbb R\),
	\begin{equation}\label{eq:G2G}
		\int_{\mathbb R}G_t(h-\xi)^2G_s(\xi)\,d\xi
		\le
		Ct^{-\beta/\alpha}G_{t+s}(h).
	\end{equation}
	Similarly,
	\begin{equation}\label{eq:GG2}
		\int_{\mathbb R}G_t(h-\xi)G_s(\xi)^2\,d\xi
		\le
		Cs^{-\beta/\alpha}G_{t+s}(h).
	\end{equation}
\end{corollary}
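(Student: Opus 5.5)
The idea is to bound one factor of \(G_t\) pointwise by its supremum and then apply Lemma~\ref{lem:quasi-semigroup-G} to what remains. By Lemma~\ref{le:G-sup} with \(d=1\), for all \(t>0\) and \(x\in\mathbb R\),
\[
G_t(x)\le c_2\left(t^{-\beta/\alpha}\wedge \frac{t^\beta}{|x|^{1+\alpha}}\right)\le c_2\,t^{-\beta/\alpha}.
\]
Hence \(\sup_{x}G_t(x)\le c_2 t^{-\beta/\alpha}\).

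For \eqref{eq:G2G}, write \(G_t(h-\xi)^2=G_t(h-\xi)\cdot G_t(h-\xi)\) and bound the first factor by this supremum. This gives
\[
\int_{\mathbb R}G_t(h-\xi)^2G_s(\xi)\,d\xi
\le c_2\,t^{-\beta/\alpha}\int_{\mathbb R}G_t(h-\xi)G_s(\xi)\,d\xi .
\]
The remaining integral is exactly the left-hand side of \eqref{eq:quasi-semigroup-G}. Lemma~\ref{lem:quasi-semigroup-G} bounds it by \(CG_{t+s}(h)\), so \eqref{eq:G2G} holds with the product of the two constants.

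For \eqref{eq:GG2}, argue symmetrically: bound one factor of \(G_s(\xi)^2\) by \(c_2 s^{-\beta/\alpha}\). Then apply Lemma~\ref{lem:quasi-semigroup-G} again to \(\int_{\mathbb R} G_t(h-\xi)G_s(\xi)\,d\xi\le CG_{t+s}(h)\).

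There is no real obstacle. The only point to check is that the constants are uniform in \(t,s>0\) and \(h\in\mathbb R\). This holds because both Lemma~\ref{le:G-sup} and Lemma~\ref{lem:quasi-semigroup-G} hold with constants independent of these variables, and the restriction \(d=1<\alpha\) needed for Lemma~\ref{le:G-sup} is part of the standing assumption \eqref{hyp:main}.
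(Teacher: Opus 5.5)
Your proposal is correct and is essentially the paper's own proof: the paper also uses $G_t(x)\le Ct^{-\beta/\alpha}$ from Lemma~\ref{le:G-sup} to get $G_t^2\le Ct^{-\beta/\alpha}G_t$, and then applies Lemma~\ref{lem:quasi-semigroup-G}. The second inequality is treated in the paper by the same symmetric argument you give.
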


\begin{proof}
	By Lemma \ref{le:G-sup}, we have 
	\[
	G_t(x)\le Ct^{-\beta/\alpha},
	\qquad t>0,\ x\in\mathbb R.
	\]
	Hence
	\[
	G_t(x)^2
	\le
	Ct^{-\beta/\alpha}G_t(x).
	\]
	Therefore, by Lemma~\ref{lem:quasi-semigroup-G},
	\[
	\begin{aligned}
		\int_{\mathbb R}G_t(h-\xi)^2G_s(\xi)\,d\xi
		&\le
		Ct^{-\beta/\alpha}
		\int_{\mathbb R}G_t(h-\xi)G_s(\xi)\,d\xi  \\
		&\le
		Ct^{-\beta/\alpha}G_{t+s}(h).
	\end{aligned}
	\]
	This proves \eqref{eq:G2G}. The proof of \eqref{eq:GG2} is analogous.
\end{proof}

We now summarize some further properties of $G_t(x)$ needed in the sequel.

\begin{lemma}\label{lem:basic-G-estimates} For \(t>0\), the following properties hold. \begin{enumerate}[label=\textup{(\roman*)}, ref=\textup{(\roman*)}] 
		\item \label{it:G-property-1}
		For every \(x\in\mathbb R\), 
		\[ G_t(x)\ge0,\qquad \int_{\mathbb R}G_t(x)\,dx=1, \] 
		and 
		\[ G_t(x)=G_t(-x). \] 
		\item \label{it:G-property-2} The kernel \(G_t\) satisfies the scaling property
		\[ G_t(x) = t^{-\beta/\alpha} G_1\left(t^{-\beta/\alpha}x\right). \] 
		\item \label{it:G-property-3} There exists a constant \(C>0\) such that \[ \|G_t^{1/2}\|_{L^1(\mathbb R)} \le Ct^{\frac{\beta}{2\alpha}}. \] 
		\item \label{it:G-property-4} There exists a constant \(C>0\) such that \[ \|G_t^{1/2}*G_t^{1/2}\|_{L^1(\mathbb R)} \le Ct^{\frac \beta\alpha}. \] 
	\end{enumerate} 
\end{lemma}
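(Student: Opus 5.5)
Items \ref{it:G-property-1} and \ref{it:G-property-2} come from the subordination representation \eqref{eq:G-subordination-change} and the Fourier formula \eqref{eq:G-Fourier}, while \ref{it:G-property-3} and \ref{it:G-property-4} follow from the two-sided bound of Lemma~\ref{le:G-sup} (for $d=1$) together with scaling.

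\emph{Property (i).} The stable density $p_\alpha(r,\cdot)$ is nonnegative, symmetric and has unit mass for every $r>0$, and $g_\beta$ is a probability density on $(0,\infty)$. Writing $G_t(x)=\int_0^\infty p_\alpha((t/s)^\beta,x)g_\beta(s)\,ds$, nonnegativity and symmetry in $x$ are inherited directly. Tonelli's theorem gives
\[
\int_{\R}G_t(x)\,dx=\int_0^\infty g_\beta(s)\,ds=1.
\]
Alternatively, $\widehat G_t(0)=E_\beta(0)=1$, and $\widehat G_t$ is real and even.

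\emph{Property (ii).} Use the stable scaling $p_\alpha(r,x)=r^{-1/\alpha}p_\alpha(1,r^{-1/\alpha}x)$ with $r=(t/s)^\beta=t^\beta s^{-\beta}$. This gives
\[
p_\alpha((t/s)^\beta,x)=t^{-\beta/\alpha}\,p_\alpha\bigl(s^{-\beta},t^{-\beta/\alpha}x\bigr).
\]
Integrating against $g_\beta(s)\,ds$ yields $G_t(x)=t^{-\beta/\alpha}G_1(t^{-\beta/\alpha}x)$. Equivalently, on the Fourier side, $\widehat G_t(\xi)=E_\beta(-\nu|\xi|^\alpha t^\beta)=\widehat G_1(t^{\beta/\alpha}\xi)$.

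\emph{Property (iii).} By (ii) and the substitution $y=t^{-\beta/\alpha}x$,
\[
\|G_t^{1/2}\|_{L^1}=t^{-\beta/(2\alpha)}\,t^{\beta/\alpha}\|G_1^{1/2}\|_{L^1}=t^{\beta/(2\alpha)}\|G_1^{1/2}\|_{L^1}.
\]
It remains to show that $\|G_1^{1/2}\|_{L^1}<\infty$. Lemma~\ref{le:G-sup} gives $G_1(x)\le c_2(1\wedge |x|^{-1-\alpha})$, so $G_1^{1/2}(x)\le C(1\wedge|x|^{-(1+\alpha)/2})$. This is integrable precisely because $(1+\alpha)/2>1$, i.e.\ $\alpha>1$. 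This is the one place where the standing assumption $\alpha\in(1,2]$ is essential, and it is the only real point to check.

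\emph{Property (iv).} Young's inequality (or simply Tonelli) gives
\[
\|G_t^{1/2}*G_t^{1/2}\|_{L^1}\le\|G_t^{1/2}\|_{L^1}^2\le Ct^{\beta/\alpha}
\]
by (iii). In fact equality holds, since both factors are nonnegative.
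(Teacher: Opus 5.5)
Your proof is correct and follows essentially the same route as the paper. Items (i) and (iv) match the paper's arguments. The differences are minor: you derive (ii) from the stable scaling inside the subordination integral, whereas the paper uses the Fourier identity you give as an alternative. In (iii) you first reduce to $t=1$ via (ii), which gives the exact identity $\|G_t^{1/2}\|_{L^1}=t^{\beta/(2\alpha)}\|G_1^{1/2}\|_{L^1}$, instead of splitting the integral at $|x|=t^{\beta/\alpha}$ with the time-dependent bound of Lemma~\ref{le:G-sup}.
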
 
\begin{proof} 
	We prove the assertions one by one. \begin{enumerate}[label=\textup{(\roman*)}, ref=\textup{(\roman*)}] 
		\item By \eqref{eq:G=integral}, together with \(p_\alpha(s,x)\ge0\) and \(f_{E_t}(s)\ge0\), we have \(G_t(x)\ge0\). Moreover, by Fubini's theorem, \[ \int_{\mathbb R}G_t(x)\,dx = \int_0^\infty \int_{\mathbb R}p_\alpha(s,x)\,dx\,f_{E_t}(s)\,ds = \int_0^\infty f_{E_t}(s)\,ds = 1. \] Since \(p_\alpha(s,x)=p_\alpha(s,-x)\), the same representation yields \(G_t(x)=G_t(-x)\). 
		\item By \eqref{eq:G-Fourier}, 
		\[ \widehat G_t(\xi) = E_\beta\left(-\nu|\xi|^\alpha t^\beta\right). 
		\] 
		Define \[ H_t(x):=t^{-\beta/\alpha}G_1(t^{-\beta/\alpha}x). \] Then, by the change of variables \(z=t^{-\beta/\alpha}x\),
		\[
		\widehat H_t(\xi)
		=
		\widehat G_1(t^{\beta/\alpha}\xi)
		=
		E_\beta\left(-\nu|t^{\beta/\alpha}\xi|^\alpha\right)
		=
		E_\beta\left(-\nu|\xi|^\alpha t^\beta\right)
		=
		\widehat G_t(\xi).
		\]
		Hence \(H_t=G_t\), and therefore
		\[
		G_t(x)
		=
		t^{-\beta /\alpha}
		G_1(t^{-\beta/\alpha}x).
		\]
		\item  By Lemma~\ref{le:G-sup}, in the one-dimensional case, there exists a constant
		\(C>0\) such that
		\[
		G_t(x)^{1/2}
		\le
		C\left(
		t^{-\beta/(2\alpha)}
		\wedge
		t^{\beta/2}|x|^{-(1+\alpha)/2}
		\right).
		\]
		Splitting the integral into the two regions
		\[
		|x|\le t^{\beta/\alpha}
		\qquad\text{and}\qquad
		|x|>t^{\beta/\alpha},
		\]
		we obtain
		\[
		\begin{aligned}
			\int_{\mathbb R}G_t(x)^{1/2}\,dx
			&\le
			C\int_{|x|\le t^{\beta/\alpha}}t^{-\frac{\beta}{2\alpha}}\,dx
			+
			C\int_{|x|>t^{\beta/\alpha}}
			t^{\frac\beta2}|x|^{-\frac{1+\alpha}{2}}\,dx  \\
			&=
			C t^{-\frac{\beta}{2\alpha}}t^{\frac \beta\alpha}
			+
			C t^{\frac\beta2}
			\int_{|x|>t^{\beta/\alpha}}|x|^{-\frac{1+\alpha}{2}}\,dx .
		\end{aligned}
		\]
		For the second integral, since \(\alpha>1\),
		\[
			\int_{|x|>t^{\beta/\alpha}}|x|^{-\frac{1+\alpha}{2}}\,dx
			=
			2\int_{t^{\beta/\alpha}}^\infty x^{-\frac{1+\alpha}{2}}\,dx
			=
			Ct^{-\frac{\beta(\alpha-1)}{2\alpha}}.
		\]
		Therefore,
		\[
			\int_{\mathbb R}G_t(x)^{1/2}\,dx
			\le
			C t^{\frac{\beta}{2\alpha}}
			+
			C t^{\frac\beta2}
			t^{-\frac{\beta(\alpha-1)}{2\alpha}}
			=C t^{\frac{\beta}{2\alpha}}.
		\]
		\item By Young's convolution inequality and \ref{it:G-property-3}, \[ \|G_t^{1/2}*G_t^{1/2}\|_{L^1(\mathbb R)} \le \|G_t^{1/2}\|_{L^1(\mathbb R)}^2 \le Ct^{\frac\beta\alpha}. \]
		 \end{enumerate} 
\end{proof}

\subsection{Malliavin calculus}
In this subsection, we will recall some basic facts on the Malliavin calculus (for example, see Nualart~\cite{Nualart2006}). Let
\(
\mathcal H:=L^2(\mathbb R_+\times\mathbb R)
\)
with inner product
\[
\langle h,g\rangle_{\mathcal H}
:=
\int_0^\infty\int_{\mathbb R}h(s,y)g(s,y)\,dy\,ds.
\]
The space--time white noise \(W\) can be regarded as an isonormal Gaussian
process over \(\mathcal H\), that is, a centered Gaussian family
\[
\{W(h):h\in\mathcal H\}
\]
satisfying
\[
\mathbb E[W(h)W(g)]
=
\langle h,g\rangle_{\mathcal H},
\qquad h,g\in\mathcal H.
\]
For deterministic \(h\in\mathcal H\), we formally write
\[
W(h)=\int_0^\infty\int_{\mathbb R}h(s,y)\,W(ds,dy).
\]
Let \(\mathcal S\) be the class of smooth cylindrical random variables of the
form
\[F=f(W(h_1),\dots, W(h_n)),\]
where \(n\ge1\), \(h_1,\ldots,h_n\in\mathcal H\), and
\(f\in C_b^\infty(\mathbb R^n)\). Here \(C_b^\infty(\mathbb R^n)\) denotes the
space of infinitely differentiable functions on \(\mathbb R^n\) whose partial
derivatives of all orders are bounded. For \(F\in\mathcal S\), its Malliavin
derivative is defined by
\[DF=\sum_{i=1}^{n}\frac{\partial f}{\partial x_i}(W(h_1),\dots, W(h_n))h_i\]
where we denote by $D$ the Malliavin derivative operator and \(DF\) is an \(\mathcal H\)-valued random variable. For every \(p\ge1\), 
the operator \(D\) is closable from \(L^p(\Omega)\) into
\(L^p(\Omega;\mathcal H)\). We denote by \(\mathbb D^{1,p}\) the closure of
\(\mathcal S\) with respect to the norm
\[
\|F\|_{1,p}
:=
\left(
\mathbb E|F|^p
+
\mathbb E\|DF\|_{\mathcal H}^p
\right)^{1/p}.
\]
In particular, for \(p=2\),
\[
\|F\|_{1,2}^2
=
\mathbb E[F^2]
+
\mathbb E\|DF\|_{\mathcal H}^2.
\]

The divergence operator \(\delta\) is the adjoint of \(D\). Its domain,
	denoted by \(\operatorname{Dom}\delta\), consists of those random elements
	\(v\in L^2(\Omega;\mathcal H)\) such that there exists a constant \(C_v>0\)
	satisfying
	\[
	\left|\mathbb E\langle DF,v\rangle_{\mathcal H}\right|
	\le
	C_v\|F\|_{L^2(\Omega)},
	\qquad F\in\mathbb D^{1,2}.
	\]
	For \(v\in\operatorname{Dom}\delta\), \(\delta(v)\) is characterized by the
	duality relation
	\[
	\mathbb E[F\delta(v)]
	=
	\mathbb E\langle DF,v\rangle_{\mathcal H},
	\qquad F\in\mathbb D^{1,2}.
	\]
	If \(v=\{v(s,y):(s,y)\in\mathbb R_+\times\mathbb R\}\) is predictable and
	satisfies
	\[
	\mathbb E\int_0^\infty\int_{\mathbb R}|v(s,y)|^2\,dy\,ds<\infty,
	\]
	then \(v\in\operatorname{Dom}\delta\), and \(\delta(v)\) coincides with the
	Walsh integral:
	\[
	\delta(v)
	=
	\int_0^\infty\int_{\mathbb R}v(s,y)\,W(ds,dy).
	\]

The next lemma establishes the Malliavin differentiability of the mild solution
and identifies the equation satisfied by its Malliavin derivative. This result
will be used repeatedly in the proofs of  Theorem \ref{QCLT} and Theorem \ref{FCLT}.
\begin{lemma}\label{lem:Malliavin-derivative}
	Assume that $\sigma$ is Lipschitz continuous with Lipschitz
		constant \(L\). Then, for every $p\geq 2, T>0$
	and every $(t,x)\in [0,T]\times\mathbb R$, the mild solution
	$u(t,x)$ of \eqref{eq:main equation} belongs to $\mathbb D^{1,p}$. 
    
    Moreover, there exists a bounded predictable random field
		\[
		\Sigma=\{\Sigma(r,z):(r,z)\in[0,T]\times\mathbb R\},
		\qquad |\Sigma(r,z)|\le L,
		\quad
		\mathbb P\otimes dr\otimes dz\text{-a.e.},
		\]
		such that, for every \((t,x)\in[0,T]\times\mathbb R\), there exists a
		version of \(Du(t,x)\) satisfying, for almost every
		\((s,y)\in[0,t)\times\mathbb R\), \(\mathbb P\)-a.s.,
		\begin{equation}\label{eq:Malliavin-derivative}
			\begin{aligned}
				D_{s,y}u(t,x)
				&=
				G_{t-s}(x-y)\sigma(u(s,y))\\
				&\quad+
				\int_s^t\int_{\mathbb R}
				G_{t-r}(x-z)\Sigma(r,z)
				D_{s,y}u(r,z)\,W(dr,dz).
			\end{aligned}
		\end{equation}
		Moreover,
\(
D_{s,y}u(t,x)=0
\) for almost every \((s,y)\in(t,T]\times\mathbb R\),
\(\mathbb P\)-a.s. If, in addition, \(\sigma\in C^1(\mathbb R)\), then one may take
		\(\Sigma(r,z)=\sigma'(u(r,z))\).
\end{lemma}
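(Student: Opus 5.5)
The plan is to follow the standard Picard-iteration argument of Nualart (Proposition 1.2.4 and the SPDE version in Nualart's book, Section 2.4), adapted to the kernel \(G\). Define \(u_0(t,x)=1\) and
\[
u_{n+1}(t,x)=1+\int_0^t\int_{\mathbb R}G_{t-s}(x-y)\sigma(u_n(s,y))\,W(ds,dy).
\]
It is known (Mijena and Nane~\cite{mijena2015space}) that \(u_n(t,x)\to u(t,x)\) in \(L^p(\Omega)\), uniformly in \((t,x)\in[0,T]\times\mathbb R\).

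\textbf{Step 1: differentiability of the iterates.} We show by induction that each \(u_n(t,x)\in\mathbb D^{1,p}\). If \(\sigma\in C^1\), the chain rule gives \(D\sigma(u_n)=\sigma'(u_n)Du_n\). For merely Lipschitz \(\sigma\), we use Nualart's Proposition 1.2.4: there is a random variable \(\Sigma_n(r,z)\) with \(|\Sigma_n|\le L\) such that \(D\sigma(u_n(r,z))=\Sigma_n(r,z)Du_n(r,z)\). The rule for the derivative of a Walsh integral of an adapted integrand then gives
\[
D_{s,y}u_{n+1}(t,x)=G_{t-s}(x-y)\sigma(u_n(s,y))+\int_s^t\int_{\mathbb R}G_{t-r}(x-z)\Sigma_n(r,z)D_{s,y}u_n(r,z)\,W(dr,dz),
\]
and the right-hand side vanishes for \(s>t\) by adaptedness.

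\textbf{Step 2: uniform bound on the derivatives.} We prove
\[
\sup_n\sup_{(t,x)\in[0,T]\times\mathbb R}\mathbb E\|Du_n(t,x)\|_{\mathcal H}^p<\infty.
\]
Apply Burkholder's inequality in the Hilbert space \(\mathcal H\) together with Minkowski's inequality. Setting
\[
V_n(t):=\sup_x\bigl(\mathbb E\|Du_n(t,x)\|_{\mathcal H}^p\bigr)^{2/p},
\]
this yields
\[
V_{n+1}(t)\le C\int_0^t\int_{\mathbb R}G_{t-s}(x-y)^2\,dy\,ds+CL^2\int_0^t\bigl(\textstyle\int_{\mathbb R}G_{t-r}(x-z)^2dz\bigr)V_n(r)\,dr.
\]
The first term uses the moment bound \eqref{eq:solution-m-bound} for the iterates. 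By Lemma~\ref{le:G^2},
\[
\int_{\mathbb R}G_{t-r}^2=C^*(t-r)^{-\beta/\alpha},
\]
which is integrable because \(\beta/\alpha<1\). A fractional Gronwall lemma then gives a bound uniform in \(n\).

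\textbf{Step 3: closability and passage to the limit.} Since \(u_n\to u\) in \(L^p\) and the derivatives are bounded in \(L^p(\Omega;\mathcal H)\), the closability of \(D\) together with Nualart's Lemma 1.2.3 gives \(u(t,x)\in\mathbb D^{1,p}\). It also gives \(Du_n(t,x)\to Du(t,x)\) weakly.

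To identify the equation, note that \(\Sigma_n\) is bounded by \(L\). Along a subsequence, \(\Sigma_n\) converges weakly-\(*\) in \(L^\infty(\Omega\times[0,T]\times\mathbb R)\), and we need a limit \(\Sigma\) that is predictable and satisfies \(D\sigma(u)=\Sigma Du\). This identification is the main obstacle, because weak limits of products are problematic.

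The first approach is to mollify: take \(\sigma_\epsilon=\sigma*\rho_\epsilon\), with \(\sigma_\epsilon'\) bounded by \(L\). Solve the equation with \(\sigma_\epsilon\) to get \(u^\epsilon\), which satisfies the derivative equation with \(\Sigma^\epsilon=\sigma_\epsilon'(u^\epsilon)\). Then use that \(u^\epsilon\to u\) in \(L^p\). Alternatively, apply Proposition 1.2.4 directly to \(\sigma(u(r,z))\), since \(u\in\mathbb D^{1,p}\) is now known. This gives the existence of some \(\Sigma\) with \(|\Sigma|\le L\) and \(D\sigma(u(r,z))=\Sigma(r,z)Du(r,z)\). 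Predictability is obtained by choosing \(\Sigma\) as a measurable function of \(u(r,z)\) (a Borel version of the generalized derivative evaluated at \(u\)), which is adapted because \(u\) is.

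Finally, we differentiate the mild equation \eqref{eq:solution} directly using the commutation relation \(D\delta(v)=v+\delta(Dv)\) for adapted \(v\). This yields \eqref{eq:Malliavin-derivative}. The vanishing of \(D_{s,y}u(t,x)\) for \(s>t\) follows from the \(\mathcal F_t\)-measurability of \(u(t,x)\). When \(\sigma\in C^1\), the chain rule gives directly \(\Sigma=\sigma'(u)\).
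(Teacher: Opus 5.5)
Your proposal is correct and follows essentially the same route as the paper. You differentiate the Picard iterates via the Lipschitz chain rule with $|\Sigma_n|\le L$, bound $\|Du_n(t,x)\|_{L^p(\Omega;\mathcal H)}$ uniformly in $n$ through $V_{n+1}(t)\le C+C\int_0^t(t-r)^{-\beta/\alpha}V_n(r)\,dr$, get $u(t,x)\in\mathbb D^{1,p}$ by weak compactness and closability, and then apply the chain rule directly to $\sigma(u(r,z))$ and differentiate the mild equation. The paper does the same; the mollification detour is unnecessary, and you should note explicitly that the uniform derivative bound passes to $u$ by weak lower semicontinuity, since that is what puts the integrand in $\mathbb D^{1,p}(\mathcal H)$. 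Your choice $\Sigma(r,z)=\tilde\sigma'(u(r,z))$, for a Borel version $\tilde\sigma'$ of the a.e. derivative, secures predictability, which the paper only asserts, but it relies on the Bouleau--Hirsch fact that $Du(r,z)=0$ a.s. on $\{u(r,z)\in N\}$ for every Lebesgue-null set $N$.
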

\begin{proof}
	The proof follows the Picard iteration argument used in Proposition~2.4.4
	of Nualart~\cite{Nualart2006}.
	
	As introduced in Mijena and Nane~\cite{mijena2015space}, we consider the Picard iteration scheme defined by 
	\[u_0(t,x)=1,\]
	and for $n\geq 0$,
	\[
	u_{n+1}(t,x)
	=
	1
	+
	\int_0^t\int_{\mathbb R}
	G_{t-r}(x-z)\sigma(u_n(r,z))\,W(dr,dz).
	\] 
	The standard Picard estimates show that
\(\{u_n(t,x)\}_{n\ge0}\) is a Cauchy sequence in \(L^p(\Omega)\).
Hence,
\[
u_n(t,x)\longrightarrow u(t,x)
\quad\text{in }L^p(\Omega).
\]
	
	Moreover, the standard Picard estimates and the linear growth of \(\sigma\) yield
\begin{equation}\label{eq:uniform-Picard-moment}
\sup_{n\ge0}
\sup_{(t,x)\in[0,T]\times\mathbb R}
\left(
\|u_n(t,x)\|_p
+
\|\sigma(u_n(t,x))\|_p
\right)
<\infty.
\end{equation}

	We prove inductively that, for every \(n\ge0\), \(u_n(t,x)\in\mathbb D^{1,p}\) for all \((t,x)\in[0,T]\times\mathbb R\). Since $u_0(t,x)=1$ is deterministic, the assertion is clear for \(n=0\).
	Suppose that \(u_n(t,x)\in\mathbb D^{1,p}\) for all
\((t,x)\in[0,T]\times\mathbb R\), and that
\begin{equation}\label{eq:D_sy-assumption}
		\sup_{(t,x)\in[0,T]\times\R}
		\mathbb{E}\left[\left(
		\int_0^t \int_\R
		\left|D_{s,y}u_n(t,x)\right|^2
		\,dy\,ds
		\right)^{p/2}\right]
		<\infty.
	\end{equation}
	By the Lipschitz chain rule for Malliavin
		derivatives, 
		there exists a predictable random field \(\Sigma_n\), satisfying
		\[
		|\Sigma_n(r,z)|\le L,
		\]
		such that
		\[
		D\sigma(u_n(r,z))
		=
		\Sigma_n(r,z)Du_n(r,z).
		\]
    By Minkowski's integral inequality, \eqref{eq:uniform-Picard-moment}, and Lemma~\ref{le:G^2}, \begin{equation}\label{eq:integrand-n-Lp} \begin{aligned} 
    &\left\| \left( \int_0^t\int_{\mathbb R} |G_{t-r}(x-z)\sigma(u_n(r,z))|^2\,dz\,dr \right)^{1/2} \right\|_p^2\\ &\qquad\le \int_0^t\int_{\mathbb R} G_{t-r}(x-z)^2 \|\sigma(u_n(r,z))\|_p^2\,dz\,dr\\ &\qquad\le C\int_0^t(t-r)^{-\beta/\alpha}\,dr <\infty. 
    \end{aligned} \end{equation}
    Moreover, 
    \[ D_{s,y}\left(\mathbf 1_{[0,t]}(r) G_{t-r}(x-z)\sigma(u_n(r,z))\right) = \mathbf 1_{\{0\le s\le r\le t\}} G_{t-r}(x-z)\Sigma_n(r,z) D_{s,y}u_n(r,z). 
    \] 
    Hence, by Minkowski's integral inequality, \(|\Sigma_n|\le L\), and the induction hypothesis \eqref{eq:D_sy-assumption}, \begin{equation}\label{eq:D-integrand-n-Lp} \begin{aligned} 
    &\left\| \left( \int_0^t\int_{\mathbb R} \|D\left(\mathbf 1_{[0,t]}(r) G_{t-r}(x-z)\sigma(u_n(r,z))\right)\|_{\mathcal H}^2 \,dz\,dr \right)^{1/2} \right\|_p^2\\ &\qquad\le L^2\int_0^t\int_{\mathbb R} G_{t-r}(x-z)^2 \|Du_n(r,z)\|_{L^p(\Omega;\mathcal H)}^2 \,dz\,dr <\infty. 
    \end{aligned} \end{equation}
    Thus, 
    \[ \mathbf 1_{[0,t]}(r) G_{t-r}(x-z)\sigma(u_n(r,z))\in\mathbb D^{1,p}(\mathcal H). 
    \] The Malliavin differentiation rule for predictable stochastic integrals therefore implies that 
    \[ u_{n+1}(t,x)\in\mathbb D^{1,p}, 
    \] 
    and, for almost every \((s,y)\in[0,t)\times\mathbb R\), \(\mathbb P\)-a.s., 
    \[ \begin{aligned} D_{s,y}u_{n+1}(t,x) &= G_{t-s}(x-y)\sigma(u_n(s,y))\\ &\quad+ \int_s^t\int_{\mathbb R} G_{t-r}(x-z)\Sigma_n(r,z) D_{s,y}u_n(r,z)\,W(dr,dz). \end{aligned} 
    \] 
    For almost every \((s,y)\in(t,T]\times\mathbb R\), \(\mathbb P\)-a.s.,
    \[ D_{s,y}u_{n+1}(t,x)=0. 
    \]
    
It remains to obtain a uniform bound for the Malliavin derivatives. For $n\geq 0$, we denote by 
	\[
	V_n(t)
		:=
		\sup_{x\in\mathbb R}
		\left\|
		\left(
		\int_0^t\int_{\mathbb R}
		|D_{s,y}u_n(t,x)|^2\,dy\,ds
		\right)^{1/2}
		\right\|_p^2. 
	\]
    Using the representation of \(Du_{n+1}(t,x)\) and the triangle
inequality in \(L^p(\Omega;\mathcal H)\), we have
\[
\begin{aligned}
\|Du_{n+1}(t,x)\|_{L^p(\Omega;\mathcal H)}^2
&\le
C\left\|
\left(
\int_0^t\int_{\mathbb R}
G_{t-s}(x-y)^2
|\sigma(u_n(s,y))|^2\,dy\,ds
\right)^{1/2}
\right\|_p^2\\
&+
C\left\|
\int_0^t\int_{\mathbb R}
D\left(\mathbf 1_{[0,t]}(r)
G_{t-r}(x-z)\sigma(u_n(r,z))\right)\,W(dr,dz)
\right\|_{L^p(\Omega;\mathcal H)}^2.
\end{aligned}
\]
 The first term is bounded
by \(C_{p,T}\) by \eqref{eq:integrand-n-Lp}. For the stochastic term,
the Hilbert-valued Burkholder--Davis--Gundy inequality, followed by
Minkowski's inequality, gives
\[
\begin{aligned}
&\left\|
\int_0^t\int_{\mathbb R}
D\left(\mathbf 1_{[0,t]}(r)
G_{t-r}(x-z)\sigma(u_n(r,z))\right)\,W(dr,dz)
\right\|_{L^p(\Omega;\mathcal H)}^2\\
&\qquad\le
C_p
\left\|D\left(\mathbf 1_{[0,t]}(r)
G_{t-r}(x-z)\sigma(u_n(r,z))\right)\right\|_
{L^p(\Omega;\mathcal H\otimes\mathcal H)}^2\\
&\qquad\le
C\int_0^t\int_{\mathbb R}
G_{t-r}(x-z)^2V_n(r)\,dz\,dr.
\end{aligned}
\]

	By the heat-kernel estimate in Lemma \ref{le:G^2}, we have
	\[
	\int_{\mathbb R}G_{t-r}^2(z)\,dz
	\leq
	C(t-r)^{-\frac \beta \alpha},
	\]
	and since $\beta /\alpha<1$, the kernel
	$(t-r)^{-\beta /\alpha}$ is integrable on $[0,t]$. 
    Consequently,    \begin{equation}\label{eq:Vn-Volterra} 
    V_{n+1}(t) \le C_{p,T} + C\int_0^t (t-r)^{-\beta/\alpha}V_n(r)\,dr. \end{equation}
    Since \(V_n\) is bounded on \([0,T]\) by the induction hypothesis and
\(\beta/\alpha<1\), \eqref{eq:Vn-Volterra} shows that \(V_{n+1}(t)<\infty\).
This completes the induction. We next derive a bound independent of \(n\).
    For \(N\ge1\), define \[ W_N(t):=\max_{0\le n\le N}V_n(t). \] Since \(V_0(t)=0\),  \eqref{eq:Vn-Volterra} implies that 
    \[ W_N(t) \le C_{p,T} + C\int_0^t(t-r)^{-\frac{\beta}{\alpha}}W_N(r)\,dr. 
    \] The singular Gronwall inequality therefore gives 
    \[ W_N(t) \le C_{p,T} E_{1-\beta/\alpha}\left(C\,\Gamma\left(1-\frac{\beta}{\alpha}\right)t^{1-\frac{\beta}{\alpha}}\right), \qquad t\in[0,T], 
    \] 
    where \(E_{1-\beta/\alpha}\) denotes the Mittag--Leffler function. Since the right-hand side is independent of \(N\), we obtain \begin{equation}\label{eq:uniform-Picard-D} \sup_{n\ge0}\sup_{t\in[0,T]}V_n(t)  <\infty. 
    \end{equation}
    For every fixed \((t,x)\), the sequence
		\(\{Du_n(t,x)\}_{n\ge0}\) is therefore bounded in
		\(L^p(\Omega;\mathcal H)\). 
    Since \(p>1\), this space is reflexive. Hence, there exist a subsequence, still denoted by \(n\), and an element \(U(t,x)\in L^p(\Omega;\mathcal H)\) such that 
        \[ Du_n(t,x) \rightharpoonup U(t,x) \quad\text{weakly in }L^p(\Omega;\mathcal H). 
        \] 
        Since \(u_n(t,x)\to u(t,x)\) in \(L^p(\Omega)\), the weak closedness
of the Malliavin derivative yields
        \[ u(t,x)\in\mathbb D^{1,p}, \qquad Du(t,x)=U(t,x). 
        \] 
        
        Furthermore, by weak lower semicontinuity and \eqref{eq:uniform-Picard-D}, \begin{equation}\label{eq:uniform-limit-D} \sup_{(t,x)\in[0,T]\times\mathbb R} \|Du(t,x)\|_{L^p(\Omega;\mathcal H)} <\infty. 
        \end{equation} 
        By the Lipschitz chain rule, there exists a bounded predictable random field \(\Sigma\), with
\(|\Sigma|\le L\),
        such that 
        \[ D\sigma(u(r,z)) = \Sigma(r,z)Du(r,z). 
        \]

        By the uniform moment bound for \(u\), \eqref{eq:uniform-limit-D}, and the same estimates as in \eqref{eq:integrand-n-Lp} and \eqref{eq:D-integrand-n-Lp}, with \(u\) in place of \(u_n\), we obtain 
        \[ \mathbf 1_{[0,t]}(r) G_{t-r}(x-z)\sigma(u(r,z))\in\mathbb D^{1,p}(\mathcal H). 
        \]
        Therefore, the Malliavin differentiation rule for predictable stochastic integrals applies to \eqref{eq:solution}. Consequently, \eqref{eq:Malliavin-derivative} follows, and the proof is complete.
\end{proof}
We next give a technical estimate that will play an essential role in the sequel. Its proof is postponed to Appendix~\ref{6}.
\begin{lemma}\label{le:malliavin}
	Let \(u\) be the solution to equation \eqref{eq:main equation}, and let \(G\) denote the
	corresponding heat kernel defined in \eqref{eq:G=integral}. Assume that \(d=1\).
	Fix \(T>0\), \(p\ge2\), and let \(0\le s<r\le T\), \(y,z\in\mathbb R\).
	Assume that the Malliavin derivative of \(u\) satisfies
	\begin{equation}\label{eq:malliavin-iterate-G}
		\|D_{s,y}u(r,z)\|_p^2
		\le C G_{r-s}^2(z-y)
		+ C\int_s^r\int_{\mathbb R}
		G_{r-r_1}^2(z-z_1)\,
		\|D_{s,y}u(r_1,z_1)\|_p^2\,dz_1\,dr_1,
	\end{equation}
	for some constant \(C>0\) depending on \(T\) and \(p\). Then there exists a
	constant \(C_{T,p}>0\), depending on \(T\) and \(p\), such that
	\begin{equation}\label{eq:bound-D-square}
		\|D_{s,y}u(r,z)\|_p^2
		\le C_{T,p}\,(r-s)^{-\frac \beta \alpha}\,G_{r-s}(z-y),
	\end{equation}
	and consequently
	\begin{equation}\label{eq:bound-D-half}
		\|D_{s,y}u(r,z)\|_p
		\le C_{T,p}\,(r-s)^{-\frac{\beta}{2\alpha}}\,G_{r-s}(z-y)^{\frac 1 2}.
	\end{equation}
\end{lemma}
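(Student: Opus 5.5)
Fix $s$ and $y$, and write $f(r,z):=\|D_{s,y}u(r,z)\|_p^2$ for $r\in(s,T]$. The plan is to iterate the inequality \eqref{eq:malliavin-iterate-G} and bound each iterate by a multiple of $(r-s)^{-\beta/\alpha}G_{r-s}(z-y)$, where the multiple decays like the terms of a Mittag--Leffler series. The first step handles the inhomogeneous term. By Lemma~\ref{le:G-sup}, $G_{r-s}(z-y)\le C(r-s)^{-\beta/\alpha}$, so
\[
G_{r-s}^2(z-y)\le C(r-s)^{-\beta/\alpha}G_{r-s}(z-y).
\]
This explains the shape of the target bound \eqref{eq:bound-D-square}.

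The second step is the key one-step estimate. Suppose $g(r_1,z_1)\le (r_1-s)^{-\gamma}G_{r_1-s}(z_1-y)$ for some $\gamma\in[\beta/\alpha,1)$. We then estimate
\[
\int_s^r\int_{\mathbb R}G_{r-r_1}^2(z-z_1)\,(r_1-s)^{-\gamma}G_{r_1-s}(z_1-y)\,dz_1\,dr_1 .
\]
Corollary~\ref{cor:G2G}, i.e.\ \eqref{eq:G2G} with $t=r-r_1$ and time $r_1-s$, bounds the spatial integral by $C(r-r_1)^{-\beta/\alpha}G_{r-s}(z-y)$. What remains is a Beta integral:
\[
\int_s^r (r-r_1)^{-\beta/\alpha}(r_1-s)^{-\gamma}\,dr_1
= B\!\left(1-\tfrac{\beta}{\alpha},\,1-\gamma\right)(r-s)^{1-\beta/\alpha-\gamma}.
\]
So the one-step map sends the weight $(r-s)^{-\gamma}G_{r-s}$ to $C\,B(\cdot,\cdot)\,(r-s)^{1-\beta/\alpha-\gamma}G_{r-s}$. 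The key observation is that the spatial profile $G_{r-s}(z-y)$ is preserved exactly, up to constants, by the quasi-semigroup property of Lemma~\ref{lem:quasi-semigroup-G}. This is precisely where the missing semigroup identity is replaced.

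The third step is the iteration. Set $\theta=1-\beta/\alpha>0$ and apply the one-step estimate $n$ times, starting from $\gamma=\beta/\alpha$. The $n$-th term is bounded by
\[
C^{n+1}\,\frac{\Gamma(\theta)^{n+1}}{\Gamma((n+1)\theta)}\,(r-s)^{-\beta/\alpha+n\theta}\,G_{r-s}(z-y),
\]
by the standard telescoping of Beta functions. Since $r-s\le T$, we have $(r-s)^{n\theta}\le T^{n\theta}$. The series in $n$ then converges to a Mittag--Leffler value $E_\theta(C\Gamma(\theta)T^\theta)<\infty$.

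The main obstacle is to make the iteration rigorous, namely to control the remainder term after $n$ steps. This remainder involves $f$ itself, integrated against $n$-fold products of kernels. I would handle it with an a priori bound. One option is to use \eqref{eq:uniform-limit-D} together with the crude bound $f\le C(r-s)^{-\beta/\alpha}$, which follows from the same equation. The other is to first run the argument for the Picard iterates $D u_n$, which satisfy the inequality with $f$ on the right replaced by the previous iterate, and then pass to the limit via lower semicontinuity. In either case the remainder is dominated by $C^n\Gamma(\theta)^n/\Gamma(n\theta)$ times a bounded quantity, so it tends to zero. This yields \eqref{eq:bound-D-square}, and \eqref{eq:bound-D-half} follows by taking square roots.
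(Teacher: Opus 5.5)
Your computational core is the paper's own argument. The paper also bounds the inhomogeneous term by $G_t^2\le Ct^{-\beta/\alpha}G_t$ and uses Corollary~\ref{cor:G2G} to keep the spatial profile $G_{r-s}(z-y)$ fixed. It then reduces everything via Beta integrals to the scalar recursion $A_{n+1}(t)=Ct^{-\rho}+C\int_0^t(t-r)^{-\rho}A_n(r)\,dr$ with $\rho=\beta/\alpha$, which is bounded by the same Mittag--Leffler series you obtain. One small fix: state the one-step estimate for every $\gamma<1$. After one step the exponent is already $2\rho-1<\rho$, outside your range $[\rho,1)$, although the Beta identity is unaffected.

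You differ from the paper in how the iteration is justified, and there your first option fails. The paper never iterates the inequality for $f$. It cites the comparison lemma of \cite{chen2019} to pass to the equality case and then runs a deterministic Picard scheme, so no remainder ever appears. You iterate the inequality itself, so you must show $C^n\int\cdots\int G^2\cdots G^2 f\to0$.

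\begin{itemize}
\item The bound \eqref{eq:uniform-limit-D} cannot do this. It controls $\|Du(t,x)\|_{L^p(\Omega;\mathcal H)}$, which is an integral over $(s,y)$, and gives nothing for a fixed $(s,y)$.
\item The ``crude bound'' $f\le C(r-s)^{-\beta/\alpha}$, uniformly in $z$, is false. As $r\downarrow s$, the term $G_{r-s}(z-y)\sigma(u(s,y))$ dominates the stochastic integral, so $f(r,y)\ge c\,(r-s)^{-2\beta/\alpha}$ near $r=s$. For example, take $s=0$, where $\sigma(u(0,y))=\sigma(1)\neq0$. This is consistent with \eqref{eq:bound-D-square} at $z=y$.
\item Even the correct sup-bound $(r-s)^{-2\beta/\alpha}$ would not control the remainder when $2\beta/\alpha\ge1$. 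In that regime $\int_s^r(r-r_1)^{-\beta/\alpha}(r_1-s)^{-2\beta/\alpha}\,dr_1=\infty$.
\end{itemize}

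Your second option does work, and it needs no remainder estimate at all. Since $Du_0=0$, apply the BDG--Minkowski computation of Section~\ref{4} to the Picard iterates, with constants uniform in $n$ by \eqref{eq:uniform-Picard-moment}. This gives
$f_{n+1}(r,z)\le CG_{r-s}^2(z-y)+C\int_s^r\int_{\mathbb R}G_{r-r_1}^2(z-z_1)f_n(r_1,z_1)\,dz_1\,dr_1$ with $f_0=0$.
The paper's induction then yields $f_n\le C_T(r-s)^{-\beta/\alpha}G_{r-s}(z-y)$ uniformly in $n$.

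This bound survives the weak limit $Du_n(r,z)\rightharpoonup Du(r,z)$ in $L^p(\Omega;\mathcal H)$. The set of $V$ with $\|V(s,y)\|_p\le h(s,y)$ for a.e.\ $(s,y)$ is convex and strongly closed (use a.e.\ convergence along a subsequence and Fatou), hence weakly closed.

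This route uses more than the hypothesis \eqref{eq:malliavin-iterate-G}. Some a priori information is unavoidable, though, since $f\equiv+\infty$ satisfies that inequality; the paper delegates this point to the cited comparison lemma. So commit to the second option and drop the first.
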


Additionally, the two-parameter Clark–Ocone formula will play a fundamental role in the proof of our results; see, for instance, Chen \emph{et al.} \cite[Proposition 6.3]{chen2021spatial}. Let $\{\mathcal{ F}_s\}_{s\geq0}$ denote the filtration generated by the space-time white noise $W$. For every
\(F\in\mathbb D^{1,2}\),
\[
F
=
\mathbb E[F]
+
\int_0^\infty\int_{\mathbb R}
\mathbb E[D_{s,z}F\mid\mathcal F_s]\,W(ds,dz),
\qquad \text{a.s.}
\]
As a direct consequence of the Clark--Ocone formula and the conditional Jensen inequality, we obtain the following Poincar\'e-type covariance bound:
\begin{equation}\label{eq:Poincare-type-ineq}
	|\operatorname{Cov}(F,G)|
	\le
	\int_0^\infty\int_{\mathbb R}
	\|D_{s,z}F\|_2\,\|D_{s,z}G\|_2\,dz\,ds \quad \text{for all}\quad F,G\in\mathbb D^{1,2}.
\end{equation}

\subsection{Stein's method}
Stein's method provides a powerful probabilistic framework for normal approximation by quantifying the distance between the distribution of a random variable and the standard normal law under various probability metrics. Its combination with Malliavin calculus, known as the Malliavin--Stein method, yields explicit bounds for these distances in terms of Malliavin derivatives and divergence operators. In this paper, we use this approach to obtain a quantitative bound in total variation distance. We recall that the total variation distance between the laws of two real-valued random variables $ F $ and $ G $, is defined by
\[d_{TV}(F,G):=\sup_{B\in \mathcal{B}(\mathbb{R})}|P(F\in B)-P(G\in B)|\]
where $ \mathcal{B}(\mathbb{R}) $ denotes the collection of all Borel sets in $ \mathbb{R} $. 

The following result is a version of Theorem 3.3.1 of Nourdin and Peccati~\cite{nourdin2012normal}.
\begin{theorem}
	For $ Z\sim N(0,1) $ and for any integrable random variable $ F $,
	\[d_{TV}(F,Z)\leq \sup_{f\in\mathcal{F}_{TV}}|\mathbb{E}[f^\prime(F)]-\mathbb{E}[Ff(F)]|,\]
	where $ \mathcal{F}_{TV}=\{f:||f||_\infty\leq \sqrt{\pi/2}, ||f^\prime||_\infty\leq2\}. $  
\end{theorem}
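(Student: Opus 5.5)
The plan is to follow the standard Stein argument: first reduce the supremum over Borel sets to a supremum over continuous test functions, then solve the Stein equation for each such test function. Let \(\mu_F\) and \(\mu_Z\) be the laws of \(F\) and \(Z\), and set \(\mu:=\mu_F+\mu_Z\), a finite Borel measure on \(\mathbb R\). Fix \(B\in\mathcal B(\mathbb R)\). Continuous functions with compact support are dense in \(L^1(\mu)\) (Lusin's theorem). Truncating such an approximant to \([0,1]\) only brings it closer to \(\mathbf 1_B\). So for every \(\varepsilon>0\) there is a continuous \(h:\mathbb R\to[0,1]\) with \(\int|h-\mathbf 1_B|\,d\mu<\varepsilon\), which gives
\[
|P(F\in B)-P(Z\in B)|\le |\mathbb E h(F)-\mathbb E h(Z)|+\varepsilon .
\]
Hence it suffices to show that \(|\mathbb E h(F)-\mathbb E h(Z)|\le \sup_{f\in\mathcal F_{TV}}|\mathbb E[f'(F)]-\mathbb E[Ff(F)]|\) for every continuous \(h\) with values in \([0,1]\). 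This detour also avoids evaluating \(f'\) at points where it is undefined: the Stein solution for the discontinuous \(\mathbf 1_B\) is only a.e. differentiable, and \(F\) need not have a density.

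For such an \(h\), I will use the Stein solution
\[
f_h(x)=e^{x^2/2}\int_{-\infty}^x\bigl(h(y)-\mathbb E h(Z)\bigr)e^{-y^2/2}\,dy .
\]
Because \(h\) is continuous, \(f_h\) is \(C^1\) and satisfies the Stein equation pointwise for every \(x\in\mathbb R\):
\[
f_h'(x)-xf_h(x)=h(x)-\mathbb E h(Z).
\]
Substituting \(x=F\) and taking expectations gives
\[
\mathbb E h(F)-\mathbb E h(Z)=\mathbb E[f_h'(F)]-\mathbb E[Ff_h(F)].
\]
Both expectations are finite, since \(f_h\) and \(f_h'\) are bounded (shown next) and \(F\) is integrable.

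It remains to verify that \(f_h\in\mathcal F_{TV}\). Put \(\tilde h:=h-\mathbb E h(Z)\). Since \(h\) takes values in \([0,1]\), we have \(\|\tilde h\|_\infty\le 1\). I will invoke the classical bounds for the Stein solution (Nourdin--Peccati, Theorem 3.3.1 and the lemmas preceding it):
\[
\|f_h\|_\infty\le\sqrt{\pi/2}\,\|\tilde h\|_\infty,
\qquad
\|f_h'\|_\infty\le 2\|\tilde h\|_\infty .
\]
For the first bound, one uses \(\int\tilde h\,d\gamma=0\) to write \(f_h\) either as an integral over \((-\infty,x]\) or as minus an integral over \([x,\infty)\), and then uses that the Mills-ratio quantity \(e^{x^2/2}\int_{|x|}^\infty e^{-y^2/2}dy\) is at most \(\sqrt{\pi/2}\). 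For the second bound, one writes \(f_h'=\tilde h+xf_h\) and uses \(|x|e^{x^2/2}\int_{|x|}^\infty e^{-y^2/2}dy\le1\), which gives \(|f_h'|\le|\tilde h|+|xf_h|\le 2\). These two bounds on the Stein solution are the only genuinely technical step, and I will cite them rather than reprove them. Combining everything and letting \(\varepsilon\downarrow0\), then taking the supremum over \(B\), yields the claimed inequality.
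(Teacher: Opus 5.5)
Your proposal is correct. The paper gives no proof of this statement and simply quotes Theorem 3.3.1 of Nourdin--Peccati; your argument is the standard proof behind that result: approximate $\mathbf 1_B$ in $L^1(\mu_F+\mu_Z)$ by a continuous $h$ with values in $[0,1]$, note that the Stein solution $f_h$ is then $C^1$ with $\|f_h\|_\infty\le\sqrt{\pi/2}$ and $\|f_h'\|_\infty\le 2$, and use the integrability of $F$ to justify the expectations.
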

The combination of Stein's method for normal approximations with Malliavin calculus leads to the following bound on the total variation distance whose proof can be found in Proposition 2.2 of Huang \emph{et al.}\cite{huang2020}.
\begin{proposition}\label{pro:2.1}
	Let $ F=\delta(v) $ for some $ \mathcal{H} $-valued random variable $ v $ which belongs to Dom$\delta$. Assume $ \mathbb{E}[F^2]=1 $ and $ F\in \mathbb{D}^{1,2} $. Let $ Z\sim N(0,1) $. Then we have
	\begin{equation}
		d_{TV}(F,Z)\leq 2\sqrt{\mathrm{Var}\langle DF,v\rangle_\mathcal{H}}.
	\end{equation}
\end{proposition}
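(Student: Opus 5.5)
The plan is to combine the Stein bound stated just above (the version of Theorem 3.3.1 of Nourdin and Peccati) with the duality relation between $D$ and $\delta$. Fix a test function $f\in\mathcal F_{TV}$, and assume for the moment that $f\in C^1(\mathbb R)$ with $\|f'\|_\infty\le 2$. Since $F\in\mathbb D^{1,2}$ and $f'$ is bounded and continuous, the chain rule gives $f(F)\in\mathbb D^{1,2}$ with $D f(F)=f'(F)DF$. Because $F=\delta(v)$ with $v\in\operatorname{Dom}\delta$, the duality relation yields
\[
\mathbb E[Ff(F)]=\mathbb E[\delta(v)f(F)]=\mathbb E\langle D f(F),v\rangle_{\mathcal H}=\mathbb E\bigl[f'(F)\langle DF,v\rangle_{\mathcal H}\bigr].
\]
Hence
\[
\bigl|\mathbb E[f'(F)]-\mathbb E[Ff(F)]\bigr|=\bigl|\mathbb E\bigl[f'(F)\bigl(1-\langle DF,v\rangle_{\mathcal H}\bigr)\bigr]\bigr|\le 2\,\mathbb E\bigl|1-\langle DF,v\rangle_{\mathcal H}\bigr|.
\]

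Next I identify the mean of $\langle DF,v\rangle_{\mathcal H}$. Applying duality once more with $F$ itself as the test variable gives
\[
\mathbb E\langle DF,v\rangle_{\mathcal H}=\mathbb E[F\delta(v)]=\mathbb E[F^2]=1.
\]
By Cauchy--Schwarz,
\[
\mathbb E\bigl|1-\langle DF,v\rangle_{\mathcal H}\bigr|\le\sqrt{\operatorname{Var}\langle DF,v\rangle_{\mathcal H}}.
\]
If this variance is infinite there is nothing to prove. Taking the supremum over $f$ then gives the claimed bound $2\sqrt{\operatorname{Var}\langle DF,v\rangle_{\mathcal H}}$.

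The main obstacle is regularity: functions in $\mathcal F_{TV}$ are only Lipschitz, and the Lipschitz chain rule $Df(F)=f'(F)DF$ is delicate when the law of $F$ may charge the points where $f$ is not differentiable. To avoid this, I would not work with arbitrary $f\in\mathcal F_{TV}$. Instead, I would go back to the Stein equation $f'(x)-xf(x)=h(x)-\mathbb E[h(Z)]$. First, the supremum defining $d_{TV}$ can be restricted to continuous $h:\mathbb R\to[0,1]$; this follows by approximating indicators of Borel sets in $L^1$ of the law of $F$ plus the Gaussian law, via Lusin's theorem. For continuous $h$, the Stein solution $f_h$ is $C^1$ with $\|f_h\|_\infty\le\sqrt{\pi/2}$ and $\|f_h'\|_\infty\le 2$. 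The smooth computation above then applies verbatim to $f_h$, giving
\[
\bigl|\mathbb E[h(F)]-\mathbb E[h(Z)]\bigr|\le 2\sqrt{\operatorname{Var}\langle DF,v\rangle_{\mathcal H}}
\]
for every such $h$, and hence the total variation bound. Alternatively, one could mollify $f$ to $f_\varepsilon=f*\rho_\varepsilon$, which preserves the bounds on $f$ and $f'$, and pass to the limit; but the route through continuous $h$ avoids any convergence argument for $f_\varepsilon'(F)$.
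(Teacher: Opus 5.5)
Your proof is correct and follows the same route as the argument the paper relies on; the paper gives no proof of its own and cites Proposition 2.2 of Huang et al., whose proof is exactly Stein's bound, the duality $\mathbb E[Ff(F)]=\mathbb E[f'(F)\langle DF,v\rangle_{\mathcal H}]$, the identity $\mathbb E\langle DF,v\rangle_{\mathcal H}=\mathbb E[F^2]=1$, and Cauchy--Schwarz. Your extra reduction to continuous $h$ with values in $[0,1]$, which makes the Stein solution $f_h$ of class $C^1$ so that the classical chain rule $Df_h(F)=f_h'(F)DF$ applies, rigorously closes a regularity point that the paper's description of $\mathcal F_{TV}$ (no $C^1$ requirement stated) leaves implicit.
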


In order to prove Theorem \ref{FCLT}, we need the following proposition, which is a generalization of Theorem 6.1.2 in Nourdin and Peccati~\cite{nourdin2012normal}.
\begin{proposition}\label{pro:con-for-finite-distri}
	Let $F=(F^{(1)},\ldots,F^{(m)})$ be a random vector such that
	$F^{(i)}=\delta(v^{(i)})$ for $v^{(i)}\in \operatorname{Dom}\delta$,
	$i=1,\ldots,m$. Assume $F^{(i)}\in \mathbb{D}^{1,2}$ for
	$i=1,\ldots,m$. Let $Z$ be an $m$-dimensional centered Gaussian vector
	with covariance matrix $(C_{i,j})_{1\leq i,j\leq m}$. For any $C^2$
	function $h:\mathbb{R}^m\to\mathbb{R}$ with bounded second partial
	derivatives, we have
	\[
	\left|
	\mathbb{E}h(F)-\mathbb{E}h(Z)
	\right|
	\leq
	\frac{1}{2}\|h''\|_{\infty}
	\sqrt{
		\sum_{i,j=1}^{m}
		\mathbb{E}\left[
		\left(
		C_{i,j}
		-
		\left\langle DF^{(i)},v^{(j)}\right\rangle_{\mathcal{H}}
		\right)^2
		\right]
	},
	\]
	where
	\[
	\|h''\|_{\infty}
	=
	\max_{1\leq i,j\leq m}
	\sup_{x\in\mathbb{R}^m}
	\left|
	\frac{\partial^2 h}{\partial x_i\partial x_j}(x)
	\right|.
	\]
\end{proposition}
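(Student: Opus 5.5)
The plan is the standard Gaussian interpolation (smart path) argument of Nourdin and Peccati, adapted so that the Malliavin integration by parts uses the given representations $F^{(i)}=\delta(v^{(i)})$ in place of $-DL^{-1}F^{(i)}$. Without loss of generality take $Z$ independent of the white noise $W$; the law of $Z$ alone enters $\mathbb E h(Z)$. For $t\in[0,1]$ set
\[
\Psi(t):=\mathbb E\, h\bigl(\sqrt{1-t}\,F+\sqrt t\,Z\bigr),
\]
so that $\mathbb E h(Z)-\mathbb E h(F)=\Psi(1)-\Psi(0)=\int_0^1\Psi'(t)\,dt$. Because $h''$ is bounded, $h'$ has linear growth and $F\in L^2(\Omega)$, so differentiation under the expectation is justified. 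This gives
\[
\Psi'(t)=\frac12\sum_{i=1}^m\mathbb E\Bigl[\partial_i h(\Phi_t)\Bigl(\frac{Z^{(i)}}{\sqrt t}-\frac{F^{(i)}}{\sqrt{1-t}}\Bigr)\Bigr],
\qquad \Phi_t:=\sqrt{1-t}\,F+\sqrt t\,Z .
\]

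Next, each term is integrated by parts separately. For the $Z$-term, condition on $W$ and apply Gaussian integration by parts in $Z$:
\[
\mathbb E[\partial_i h(\Phi_t)Z^{(i)}]=\sqrt t\sum_j C_{i,j}\,\mathbb E[\partial_{ij}h(\Phi_t)].
\]
For the $F$-term, condition on $Z$ and use the duality $\mathbb E[G\,\delta(v^{(i)})]=\mathbb E\langle DG,v^{(i)}\rangle_{\mathcal H}$ with $G=\partial_i h(\Phi_t)$. This $G$ lies in $\mathbb D^{1,2}$ because $\partial_i h$ is Lipschitz and $F^{(j)}\in\mathbb D^{1,2}$. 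The chain rule gives $DG=\sqrt{1-t}\sum_j\partial_{ij}h(\Phi_t)DF^{(j)}$, hence
\[
\mathbb E[\partial_i h(\Phi_t)F^{(i)}]=\sqrt{1-t}\sum_j\mathbb E\bigl[\partial_{ij}h(\Phi_t)\langle DF^{(j)},v^{(i)}\rangle_{\mathcal H}\bigr].
\]
The $\sqrt t$ and $\sqrt{1-t}$ factors cancel, and after using the symmetry of $h''$ to swap $i,j$ we obtain the exact identity
\[
\Psi'(t)=\frac12\sum_{i,j=1}^m\mathbb E\Bigl[\partial_{ij}h(\Phi_t)\bigl(C_{i,j}-\langle DF^{(i)},v^{(j)}\rangle_{\mathcal H}\bigr)\Bigr].
\]

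The last step is to bound this double sum pointwise, and I expect it to be the main obstacle: it is where the form of the constant is decided. Set $X_{ij}:=C_{i,j}-\langle DF^{(i)},v^{(j)}\rangle_{\mathcal H}$. Cauchy--Schwarz on $\mathbb R^{m\times m}$ and then in $\Omega$ gives
\[
|\Psi'(t)|\le\frac12\Bigl(\sup_x\sum_{i,j}|\partial_{ij}h(x)|^2\Bigr)^{1/2}\Bigl(\sum_{i,j}\mathbb E X_{ij}^2\Bigr)^{1/2}.
\]
This is exactly the stated inequality, with constant $\tfrac12$ and the square root of the sum, provided $\|h''\|_\infty$ is read as the supremum of the Hilbert--Schmidt norm of the Hessian (which is how $\|h''\|_\infty$ is meant in Nourdin--Peccati's Theorem 6.1.2). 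Integrating in $t$ over $[0,1]$ then finishes.

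With the entrywise maximum norm as literally defined in the statement, I do not expect the inequality to hold for $m\ge2$. For a counterexample, take $F^{(i)}=\delta(v)$ for all $i$ with $v$ deterministic, so that $\langle DF^{(i)},v^{(j)}\rangle_{\mathcal H}=\|v\|_{\mathcal H}^2=:s$. Take all $C_{i,j}=c$ and $h(x)=\tfrac12(\sum_i x_i)^2$, so every $\partial_{ij}h=1$. Then the left side equals $\tfrac{m^2}{2}|s-c|$, while the right side equals $\tfrac m2|s-c|$. So I would prove and use the Hilbert--Schmidt version; the entrywise version follows only with an extra factor $m$. This is harmless in the proof of Theorem~\ref{FCLT}, since only finite-dimensional convergence for fixed $m$ (i.e.\ the right side tending to $0$) is needed.
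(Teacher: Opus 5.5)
Your argument is correct and is the same smart-path interpolation that the paper invokes by citing Proposition~2.3 of Huang \emph{et al.}~\cite{huang2020}, which is modelled on Theorem~6.1.2 of~\cite{nourdin2012normal}: the duality $\mathbb E[G\,\delta(v^{(i)})]=\mathbb E\langle DG,v^{(i)}\rangle_{\mathcal H}$ replaces $-DL^{-1}F^{(i)}$ and yields the exact identity $\Psi'(t)=\frac12\sum_{i,j}\mathbb E[\partial_{ij}h(\Phi_t)(C_{i,j}-\langle DF^{(i)},v^{(j)}\rangle_{\mathcal H})]$. Your counterexample is also valid: for instance $m=2$, $\|v\|_{\mathcal H}^2=2$, $C_{i,j}\equiv1$ and $h(x)=\frac12(x_1+x_2)^2$ give a left side of $2$ against a stated right side of $1$, so with the entrywise maximum as printed the inequality fails for $m\ge2$ and must be read with $\sup_x\|h''(x)\|_{\mathrm{HS}}$ (or with an extra factor $m$), which, as you note, does not affect its use in Proposition~\ref{pro:fidi-convergence}.
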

\begin{proof}
	See Proposition 2.3 in Huang \emph{et al.} \cite{huang2020}.
\end{proof}

\section{Covariance asymptotics for spatial averages}\label{3}
In this section, we identify the asymptotic covariance structure of the
centered spatial averages. This result determines both the normalization in
the quantitative central limit theorem and the covariance function of the
limiting process in the functional central limit theorem.

\begin{proposition}\label{pro:cov_GR}
		Consider the random field $\{u(t,x), t\geq0, x\in\R\}$ given by \eqref{eq:solution}. Recall
		\[
		\xi(r):=\mathbb E[\sigma(u(r,x))^2],
		\quad\text{and denote}\quad
		G_R(t):=\int_{-R}^R u(t,x)\,dx-2R.
		\]
		Then, for every \(s,t\ge0\),
		\begin{equation}
			\lim_{R\to\infty}\frac{\Cov(G_R(t),G_R(s))}{R}
			=
			2\int_0^{s\wedge t}\xi(r)\,dr.
		\end{equation}
	\end{proposition}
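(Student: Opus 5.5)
Write $G_R(t)=\int_0^t\int_{\mathbb R}\varphi_{R,t}(r,z)\sigma(u(r,z))\,W(dr,dz)$, where
\[
\varphi_{R,t}(r,z):=\int_{-R}^R G_{t-r}(x-z)\,dx .
\]
This follows from \eqref{eq:solution} and stochastic Fubini; the latter is justified by \eqref{eq:solution-m-bound} and Lemma~\ref{le:G^2}. Assume without loss of generality $s\le t$. The Itô isometry gives
\[
\Cov(G_R(t),G_R(s))=\int_0^{s}\int_{\mathbb R}\varphi_{R,t}(r,z)\varphi_{R,s}(r,z)\,\mathbb E[\sigma(u(r,z))^2]\,dz\,dr .
\]
By spatial stationarity, $\mathbb E[\sigma(u(r,z))^2]=\xi(r)$ does not depend on $z$. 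Hence
\[
\Cov(G_R(t),G_R(s))=\int_0^s\xi(r)\int_{-R}^R\int_{-R}^R\int_{\mathbb R}G_{t-r}(x-z)G_{s-r}(x'-z)\,dz\,dx\,dx'\,dr .
\]
Note that no semigroup identity is required here, since we only integrate the product against $dz$.

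Set $K_r(h):=\int_{\mathbb R}G_{t-r}(h-\zeta)G_{s-r}(\zeta)\,d\zeta$, using the symmetry from Lemma~\ref{lem:basic-G-estimates}\ref{it:G-property-1}. By the same lemma, $K_r$ is a probability density on $\mathbb R$; when $r=s$ it is the degenerate case $K_r=G_{t-s}$, and when $r=s=t$ it is a Dirac mass. The inner triple integral then equals
\[
\int_{\mathbb R}K_r(h)\,\bigl|[-R,R]\cap[-R+h,R+h]\bigr|\,dh=\int_{\mathbb R}K_r(h)\,(2R-|h|)_+\,dh .
\]
Dividing by $R$ gives
\[
\frac{\Cov(G_R(t),G_R(s))}{R}=\int_0^s\xi(r)\int_{\mathbb R}K_r(h)\Bigl(2-\tfrac{|h|}{R}\Bigr)_+\,dh\,dr .
\]
For each $r$, the inner integral is bounded by $2$ and tends to $\int K_r=2\cdot 1=2$ as $R\to\infty$, by dominated convergence in $h$ with dominating function $2K_r$. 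Since $\xi$ is bounded on $[0,s]$ by \eqref{eq:solution-m-bound} and the linear growth of $\sigma$, dominated convergence in $r$ yields the limit $2\int_0^{s}\xi(r)\,dr$, as claimed.

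The main technical point is the justification step: stochastic Fubini and the measurability and finiteness of the integrands. For these it suffices to have
\[
\int_0^t\int_{\mathbb R}\varphi_{R,t}(r,z)^2\,dz\,dr\le 2R\,t<\infty,
\]
which holds because $0\le\varphi_{R,t}\le1$ and $\int\varphi_{R,t}\,dz=2R$. We also use that stationarity in $z$ of $u(r,\cdot)$ gives the constancy of $\xi$. Beyond this, no fine kernel estimates such as Lemma~\ref{lem:quasi-semigroup-G} are needed. The only property of $G$ used is that it is a symmetric probability density, so that $K_r$ integrates to $1$.
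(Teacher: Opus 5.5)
Your proof is correct. It shares the paper's skeleton: stochastic Fubini, It\^o isometry, stationarity to pull out $\xi(r)$, reduction to $\int K(h)\bigl(2-|h|/R\bigr)_+\,dh$, and dominated convergence. The difference is in how you identify the spatial kernel.

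The paper inserts the subordination representation \eqref{eq:G=integral} and changes variables to $g_\beta$. It then uses the semigroup property of the stable density $p_\alpha$ to rewrite $\int_{\mathbb R}G_{t-r}(x-y)G_{s-r}(x'-y)\,dy$ as the mixture $\int\!\!\int p_\alpha\bigl(((t-r)/z_1)^\beta+((s-r)/z_2)^\beta,\,x-x'\bigr)g_\beta(z_1)g_\beta(z_2)\,dz_1\,dz_2$. After that, the only property it uses is that this mixture is a symmetric probability density in $x-x'$.

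You instead observe directly that $K_r=G_{t-r}*G_{s-r}$ is already a symmetric probability density, by Fubini and Lemma~\ref{lem:basic-G-estimates}\ref{it:G-property-1}. So the subordination formula and the stable semigroup are unnecessary for this proposition. Your argument is shorter and works verbatim for any family of symmetric probability kernels. The paper's detour does produce an explicit stable-mixture form of the convolution, in the spirit of its comparison-with-$p_\alpha$ philosophy elsewhere, but that form is not used afterwards.

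One small slip: the limit of the inner integral is $2\int K_r=2$, not $\int K_r$.
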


	\begin{proof}
		By the mild solution \eqref{eq:solution} and stochastic Fubini theorem,
		\[
		G_R(t)
		=
		\int_0^t\int_{\mathbb R}
		\varphi_{t-r,R}(y)\sigma(u(r,y))\,W(dr,dy),
		\]
		where
		\[
		\varphi_{s,R}(y):=\int_{-R}^R G_s(x-y)\,dx,\quad y\in\R .
		\]
		Since \(\mathbb E[u(t,x)]=1\), both \(G_R(t)\) and \(G_R(s)\) are centered.
		Hence, by It\^o's isometry,
		\begin{align*}
			\Cov(G_R(t),G_R(s))
			&=
			\int_0^{s\wedge t}\int_{\mathbb R}
			\varphi_{t-r,R}(y)\varphi_{s-r,R}(y)
			\mathbb E[\sigma(u(r,y))^2]
			\,dy\,dr \\
			&=
			\int_0^{s\wedge t}\xi(r)
			\int_{\mathbb R}
			\varphi_{t-r,R}(y)\varphi_{s-r,R}(y)
			\,dy\,dr,
		\end{align*}
		where we used the fact that \(\xi(r)\) does not depend on \(y\).
		
		Now, using the representation \eqref{eq:G=integral}
		\[
		G_\tau(x)=\int_0^\infty p_\alpha(\ell,x)f_{E_\tau}(\ell)\,d\ell,
		\qquad \tau>0,
		\]
		we obtain
		\begin{align*}
			\Cov(G_R(t),G_R(s))
			&=
			\int_{[-R,R]^2}\int_0^{s\wedge t}\xi(r)
			\int_{\mathbb R}\int_0^\infty\int_0^\infty
			p_\alpha(\ell_1,x-y)p_\alpha(\ell_2,x'-y) \\
			&\hspace{2.7cm}\times
			f_{E_{t-r}}(\ell_1)f_{E_{s-r}}(\ell_2)
			\,d\ell_1d\ell_2\,dy\,dr\,dx\,dx'.
		\end{align*}
		According to \eqref{eq:f-G-integral}, that is
		\[
		f_{E_\tau}(\ell)
		=
		\tau\beta^{-1}\ell^{-1-1/\beta}
		g_\beta\!\left(\tau \ell^{-1/\beta}\right),
		\]
		and making the changes of variables
		\[
		\ell_1=\left(\frac{t-r}{z_1}\right)^\beta,
		\qquad
		\ell_2=\left(\frac{s-r}{z_2}\right)^\beta,
		\]
		we get
		\begin{align*}
			\Cov(G_R(t),G_R(s))
			&=
			\int_{[-R,R]^2}\int_0^{s\wedge t}\xi(r)
			\int_{[0,\infty)^2}\int_{\mathbb R}
			p_\alpha\!\left(\left(\frac{t-r}{z_1}\right)^\beta,x-y\right)
			p_\alpha\!\left(\left(\frac{s-r}{z_2}\right)^\beta,x'-y\right) \\
			&\hspace{2.8cm}\times g_\beta(z_1)g_\beta(z_2)
			\,dy\,dz_1dz_2\,dr\,dx\,dx'.
		\end{align*}
		By the semigroup property, we further obtain
		\begin{align*}
			\Cov(G_R(t),G_R(s))
			&=
			\int_{[-R,R]^2}\int_0^{s\wedge t}\xi(r)
			\int_{[0,\infty)^2}
			p_\alpha\!\left(
			\left(\frac{t-r}{z_1}\right)^\beta
			+
			\left(\frac{s-r}{z_2}\right)^\beta,
			x-x'
			\right) \\
			&\hspace{2.8cm}\times g_\beta(z_1)g_\beta(z_2)
			\,dz_1dz_2\,dr\,dx\,dx'.
		\end{align*}

		 Hence
		\begin{align*}
			\frac{\Cov(G_R(t),G_R(s))}{R}
			&=
			2\int_0^{s\wedge t}\xi(r)\int_0^\infty\int_0^\infty\int_0^{2R}
			p_\alpha\!\left(
			\left(\frac{t-r}{z_1}\right)^\beta
			+
			\left(\frac{s-r}{z_2}\right)^\beta,
			x
			\right) \\
			&\hspace{3.2cm}\times
			\left(2-\frac{x}{R}\right)
			g_\beta(z_1)g_\beta(z_2)
			\,dx\,dz_1dz_2\,dr.
		\end{align*}

Since
\[
\left(2-\frac{x}{R}\right)\mathbf 1_{[0,2R]}(x)\to2,
\qquad
0\leq
\left(2-\frac{x}{R}\right)\mathbf 1_{[0,2R]}(x)\leq2,
\]
and since \(\xi\) is bounded, by the dominated convergence theorem, the symmetry of
\(p_\alpha\), and the fact that \(p_\alpha\) and \(g_\beta\) are
probability density functions, we obtain
\[
\lim_{R\to\infty}
\frac{\Cov(G_R(t),G_R(s))}{R}
=
2\int_0^{s\wedge t}\xi(r)\,dr.
\] 
		This completes the proof.
\end{proof}
As an immediate consequence of Proposition \ref{pro:cov_GR}, we have the following corollary and the proof is omitted.
\begin{corollary}\label{cor:var_GR}
	For every \(t\ge0\),
	\[
	\lim_{R\to\infty}
	\frac{\operatorname{Var}(G_R(t))}{R}
	=
	2\int_0^t\xi(r)\,dr.
	\]
	In particular, if
	\[
	\int_0^t\xi(r)\,dr>0,
	\]
	then
	\[
	\sigma_R^2=\operatorname{Var}(G_R(t))
	\sim
	2R\int_0^t\xi(r)\,dr,
	\qquad R\to\infty.
	\]
\end{corollary}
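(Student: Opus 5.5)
The first assertion follows from Proposition~\ref{pro:cov_GR} by taking \(s=t\), since \(\Cov(G_R(t),G_R(t))=\operatorname{Var}(G_R(t))\) and \(2\int_0^{t\wedge t}\xi(r)\,dr=2\int_0^t\xi(r)\,dr\). Under the additional hypothesis \(\int_0^t\xi(r)\,dr>0\), the limit is a strictly positive finite number, because \(\xi\) is bounded on \([0,t]\) by \eqref{eq:solution-m-bound} and the linear growth of \(\sigma\). Then
\[
\frac{\operatorname{Var}(G_R(t))}{2R\int_0^t\xi(r)\,dr}\longrightarrow 1, \qquad R\to\infty,
\]
which is exactly the meaning of \(\sigma_R^2\sim 2R\int_0^t\xi(r)\,dr\).

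It remains to check that \(\sigma_R^2\), as defined in \eqref{eq:sigma_R}, equals \(\operatorname{Var}(G_R(t))\). Since \(\mathbb E[u(t,x)]=1\), the mean of the spatial integral is \(2R\), so \(G_R(t)\) is the centered spatial integral. Subtracting a deterministic constant does not change the variance, so the two quantities agree.

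A remark on the positivity hypothesis, which is where the real content lies. It holds whenever \(t>0\) because \(\sigma(1)\neq0\). By Lipschitz continuity,
\[
\xi(r)\ge \tfrac12\sigma(1)^2-L^2\,\mathbb E|u(r,0)-1|^2 ,
\]
and by It\^o's isometry together with Lemma~\ref{le:G^2},
\[
\mathbb E|u(r,0)-1|^2\le C\int_0^r (r-s)^{-\beta/\alpha}\,ds = C' r^{1-\beta/\alpha}\longrightarrow 0, \qquad r\downarrow 0.
\]
Hence \(\xi(r)\ge \tfrac14\sigma(1)^2\) for all small \(r\), and so \(\int_0^t\xi(r)\,dr>0\). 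No step here is a genuine obstacle; the only care needed is the boundedness of \(\xi\), which makes the limit finite.
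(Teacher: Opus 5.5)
Your proposal is correct and follows the paper's argument: the paper omits the proof as an immediate consequence of the case $s=t$ of Proposition~\ref{pro:cov_GR}, and $\sigma_R^2=\operatorname{Var}(G_R(t))$ because subtracting the constant $2R$ does not change the variance. Your positivity remark is a quantitative version of Remark~\ref{rem:nondegenerate-variance}, replacing the appeal to $L^2$-continuity at $r=0$ with the explicit bound $\mathbb E|u(r,0)-1|^2\le C r^{1-\beta/\alpha}$.
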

\begin{remark}\label{rem:nondegenerate-variance}
	The assumption \(\sigma(1)\neq0\) excludes the degenerate case
	\(u(t,x)\equiv1\). Moreover, by the \(L^2\)-continuity of the mild
	solution at the initial time, \(u(r,x)\to1\) in \(L^2(\Omega)\) as
	\(r\downarrow0\). Hence, by the Lipschitz continuity of \(\sigma\),
	\[
		\sigma(u(r,x))\to\sigma(1)
		\quad\text{in }L^2(\Omega),
	\]
	and consequently
	\[
		\xi(r)=\mathbb E[\sigma(u(r,x))^2]\to \sigma(1)^2>0.
	\]
	Thus \(\xi(r)>0\) for all sufficiently small \(r>0\), and hence
	\[
		\int_0^t\xi(r)\,dr>0,
		\qquad t>0.
	\]
    Therefore, the limiting variance in Corollary~\ref{cor:var_GR}
is non-degenerate for every \(t>0\).

Moreover, for every \(t>0\) and \(R>0\), It\^o's isometry gives
\[
\sigma_R^2
=
\int_0^t
\xi(r)
\left\|
\varphi_{t-r,R}
\right\|_{L^2(\mathbb R)}^2
\,dr.
\]
Since \(\xi(r)>0\) for all sufficiently small \(r>0\), while
\(\varphi_{\tau,R}\geq0\) and
\[
\int_{\mathbb R}\varphi_{\tau,R}(y)\,dy=2R>0,
\]
we have
\[
\left\|\varphi_{\tau,R}\right\|_{L^2(\mathbb R)}^2>0
\]
for every \(\tau>0\). Consequently,
\[
\sigma_R^2>0
\qquad\text{for every }t>0\text{ and }R>0.
\]
\end{remark}

\section{Proof of Theorem \ref{QCLT}}\label{4}
For \(\tau>0\), recall that
\[
\varphi_{\tau,R}(y):=\int_{-R}^R G_\tau(x-y)\,dx,
\qquad y\in\mathbb R.
\]
Since \(G_\tau\ge0\) and \(\int_{\mathbb R}G_\tau(x)\,dx=1\), we have
\[
0\le \varphi_{\tau,R}(y)\le 1,
\qquad y\in\mathbb R,
\]
and
\[
\int_{\mathbb R}\varphi_{\tau,R}(y)\,dy
=
\int_{\mathbb R}\int_{-R}^R G_\tau(x-y)\,dx\,dy
=
\int_{-R}^R\int_{\mathbb R}G_\tau(x-y)\,dy\,dx
=
2R.
\]
Hence
\begin{equation}\label{eq:norm-phi}
	\|\varphi_{\tau,R}\|_{L^2(\mathbb R)}^2
	=
	\int_{\mathbb R}\varphi_{\tau,R}(y)^2\,dy
	\le
	\int_{\mathbb R}\varphi_{\tau,R}(y)\,dy
	=
	2R.
\end{equation}
To apply Proposition \ref{pro:2.1}, it suffices to represent \(F_R(t)\) as a
Skorohod integral \(\delta(v_R)\). Recall that in our case, applying the stochastic Fubini theorem,
\begin{align*}
	F_R(t)&=\frac{1}{\sigma_R}\left(\int_{-R}^R u(t,x)dx-2R\right)\\
	&=\frac{1}{\sigma_R}\int_{-R}^R\int_0^t\int_{\mathbb{R}}
	G_{t-s}(x-y)\sigma(u(s,y))W(ds,dy)dx\\
	&=\int_0^t\int_{\mathbb{R}}
	\frac{\varphi_{t-s,R}(y)}{\sigma_R}\sigma(u(s,y))W(ds,dy).
\end{align*}
Then, in view of the definition of $ \delta(X) $, we have, for any fixed $ t> 0 $, $ F_R(t)=\delta(v_R) $, where
\[v_R(s,y)
=\mathbbm{1}_{[0,t]}(s)\frac{\varphi_{t-s,R}(y)}{\sigma_R} \sigma(u(s,y)).\]
In fact, the process \(v_R\) is predictable. By the uniform moment
	bound for \(u\) and \eqref{eq:norm-phi},
	we have
	\[
	\begin{aligned}
		\mathbb E\|v_R\|_{\mathcal H}^2
		&=
		\frac{1}{\sigma_R^2}
		\int_0^t\int_{\mathbb R}
		\varphi_{t-s,R}(y)^2
		\mathbb E\bigl[|\sigma(u(s,y))|^2\bigr]
		\,dy\,ds\\
		&\leq
		\frac{C}{\sigma_R^2}
		\int_0^t
		\|\varphi_{t-s,R}\|_{L^2(\mathbb R)}^2\,ds\\
		&\leq
		\frac{2CRt}{\sigma_R^2}
		<\infty.
	\end{aligned}
	\]
	Hence \(v_R\in\operatorname{Dom}\delta\). Then
	\(\delta(v_R)\) coincides with the corresponding Walsh integral, and
	therefore \(F_R(t)=\delta(v_R)\) is valid.
Moreover,
\[D_{s,y}F_R(t)=\mathbbm{1}_{[0,t]}(s)\frac{1}{\sigma_R}\int_{-R}^RD_{s,y}u(t,x)dx.\]
By (\ref{eq:Malliavin-derivative}), we know that
\begin{align*}
	\langle DF_R(t),  v_R\rangle_\mathcal{H}
	&=\frac{1}{\sigma_R^2}\int_0^t\int_{\mathbb{R}}\varphi_{t-s,R}(y)\sigma(u(s,y))\int_{-R}^RD_{s,y}u(t,x)\,dx\,dy\,ds\\
	&=\frac{1}{\sigma_R^2}\int_0^t\int_{\mathbb{R}}\varphi_{t-s,R}(y)\sigma(u(s,y))\left[\sigma(u(s,y))\varphi_{t-s,R}(y)\right.\\
	&\quad\quad+\left. \int_s^t\int_{\mathbb{R}}\varphi_{t-r,R}(z)\Sigma(r,z)D_{s,y}u(r,z)W(dr,dz)\right]\,dy\,ds\\
	&=\frac{1}{\sigma_R^2}\int_0^t\int_{\mathbb{R}}\varphi_{t-s,R}^2(y)\sigma^2(u(s,y))dyds\\
	&\quad+\frac{1}{\sigma_R^2}\int_0^t\int_{\mathbb{R}}\varphi_{t-s,R}(y)\sigma(u(s,y))\\
	&\quad\quad\times\left(\int_s^t\int_{\mathbb{R}}\varphi_{t-r,R}(z)\Sigma(r,z)D_{s,y}u(r,z)W(dr,dz)\right)\,dy\,ds.
\end{align*}
By using the fact that
\begin{equation}\label{eq:sqrt-Var-sqrt}
	\sqrt{\mathrm{Var}\left(\int_0^tX_sds\right)}\leq \int_0^t\sqrt{\mathrm{Var} X_s}ds,
\end{equation}
for any stochastic process $ \{X_s, s\in[0,t]\} $ such that $ \sqrt{\mathrm{Var} X_s} $ is integrable over $ [0,t] $, we have
\begin{align*}
	\sqrt{\mathrm{Var}(\langle DF_R(t), v_R\rangle)}&\leq \sqrt{2}(A_1+A_2),
\end{align*}
where
\begin{equation*}
	A_1= \frac{1}{\sigma_R^2}\int_0^t\left[\int_{\mathbb{R}^2}\varphi_{t-s,R}^2(y)\varphi_{t-s,R}^2(y')\text{Cov}(\sigma^2(u(s,y)),\sigma^2(u(s,y')))\,dy\,dy'\,\right]^{\frac{1}{2}}ds
\end{equation*}
and
\begin{align*}
	A_2&=\frac{1}{\sigma_R^2}\int_0^t\left[\int_{\mathbb{R}^2}\varphi_{t-s,R}(y)\varphi_{t-s,R}(y')\int_s^t\int_{\mathbb{R}}\varphi_{t-r,R}^2(z)\right.\\
	&\qquad\times\left.\mathbb{E}\left[\sigma(u(s,y))\sigma(u(s,y'))\Sigma^2(r,z)D_{s,y}u(r,z)D_{s,y'}u(r,z)\right]\,dz\,dr\,dy\,dy'\,\right]^{\frac{1}{2}}ds.
\end{align*}

Let us first estimate the term $A_2$. Let, for $p\geq2$,
\[K_p(t)=\sup_{0\leq s\leq t}\sup_{y\in\mathbb{R}}||\sigma(u(s,y))||_p=\sup_{0\leq s\leq t}||\sigma(u(s,0))||_p.\]
It follows immediately that
\begin{multline*}
	\left|\mathbb{E}\left[\sigma(u(s,y))\sigma(u(s,y'))\Sigma^2(r,z)D_{s,y}u(r,z)D_{s,y'}u(r,z)\right]\right|\\
	\leq CK_4^2(t)L^2||D_{s,y}u(r,z)||_4 ||D_{s,y'}u(r,z)||_4.
\end{multline*}

We next estimate the moments of the Malliavin derivative.  By using Burkholder's inequality and Minkowski's inequality, we have
\begin{align*}
	||D_{s,y}u(r,z)||_p&\leq G_{r-s}(z-y)K_p(t)+Lc_p\left(\mathbb{E}\left|\int_s^r\int_{\mathbb{R}}G_{r-r_1}^2(z-z_1)|D_{s,y}u(r_1,z_1)|^2dz_1dr_1\right|^{\frac{p}{2}}\right)^{\frac{1}{p}}\\
	&\leq G_{r-s}(z-y)K_p(t)+Lc_p\left[\int_s^r\int_{\mathbb{R}}G_{r-r_1}(z-z_1)^2||(D_{s,y}u(r_1,z_1))^2||_{\frac{p}{2}}dz_1dr_1\right]^{\frac{1}{2}}\\
	&\leq G_{r-s}(z-y)K_p(t)+Lc_p\left[\int_s^r\int_{\mathbb{R}}G_{r-r_1}(z-z_1)^2||D_{s,y}u(r_1,z_1)||_p^2 dz_1dr_1\right]^{\frac{1}{2}},
\end{align*}
which implies
\begin{equation*}
	||D_{s,y}u(r,z)||_p^2\leq 2G_{r-s}^2(z-y)K_p^2(t)\,+\,2L^2c_p^2\int_s^r\int_{\mathbb{R}}G_{r-r_1}^2(z-z_1)||D_{s,y}u(r_1,z_1)||_p^2 \,dz_1\,dr_1.
\end{equation*}

Thus, Lemma \ref{le:malliavin} yields
\begin{equation}\label{eq:bound of D_{s,y}}
	||D_{s,y}u(r,z)||_p\leq C_{T,p}\,(r-s)^{-\frac{\beta}{2\alpha}}\,G_{r-s}(z-y)^{\frac12}.
\end{equation}

Therefore, for all sufficiently large \(R\), Corollary \ref{cor:var_GR} and (\ref{eq:bound of D_{s,y}}) yield
\begin{align*}
	A_2&\leq \frac{C}{R}\int_0^t\Bigg[\int_{\mathbb{R}^2}\int_s^t\int_{\mathbb{R}}
	\varphi_{t-s,R}(y)\varphi_{t-s,R}(y')\varphi_{t-r,R}^2(z)\\
	&\qquad\times (r-s)^{-\frac{\beta}{\alpha}}
	\,G_{r-s}(z-y)^{\frac 1 2}G_{r-s}(z-y')^{\frac 1 2}
	\,dz\,dr\,dy\,dy'\Bigg]^{\frac{1}{2}}ds.
\end{align*}

Using Fubini's theorem, we obtain
\[
\begin{aligned}
	A_2
	&\le \frac{C}{R}\int_0^t
	\Bigg[
	\int_s^t (r-s)^{-\frac{\beta}{\alpha}}
	\int_{\mathbb R}
	\varphi_{t-r,R}(z)^2 \\
	&\qquad\times
	\Bigg(
	\int_{\mathbb R}\varphi_{t-s,R}(y)G_{r-s}(z-y)^{\frac 1 2}\,dy
	\Bigg)
	\Bigg(
	\int_{\mathbb R}\varphi_{t-s,R}(y')G_{r-s}(z-y')^{\frac 1 2}\,dy'
	\Bigg)
	\,dz\,dr
	\Bigg]^{\frac 1 2}ds \\
	&= \frac{C}{R}\int_0^t
	\Bigg[
	\int_s^t (r-s)^{-\frac{\beta}{\alpha}}
	\int_{\mathbb R}\varphi_{t-r,R}(z)^2
	\Bigg(
	\int_{\mathbb R}\varphi_{t-s,R}(y)G_{r-s}(z-y)^{\frac 1 2}\,dy
	\Bigg)^2
	\,dz\,dr
	\Bigg]^{\frac 1 2}ds.
\end{aligned}
\]

Using \(0\le \varphi_{t-s,R}\le 1\) together with \ref{it:G-property-3} in Lemma \ref{lem:basic-G-estimates}, we deduce that
for every \(z\in\mathbb R\),
\[
\begin{aligned}
	\int_{\mathbb R}\varphi_{t-s,R}(y)G_{r-s}(z-y)^{1/2}\,dy
	&\le
	\int_{\mathbb R}G_{r-s}(z-y)^{1/2}\,dy \\
	&=
	\int_{\mathbb R}G_{r-s}(y)^{1/2}\,dy
	\le
	C(r-s)^{\frac{\beta}{2\alpha}}.
\end{aligned}
\]
Therefore,
\[
\Bigg(
\int_{\mathbb R}\varphi_{t-s,R}(y)G_{r-s}(z-y)^{1/2}\,dy
\Bigg)^2
\le C(r-s)^{\frac \beta \alpha}.
\]

Substituting this into the previous estimate yields
\[
\begin{aligned}
	A_2
	&\le \frac{C}{R}\int_0^t
	\Bigg[
	\int_s^t (r-s)^{-\frac \beta \alpha}(r-s)^{\frac \beta \alpha}
	\int_{\mathbb R}\varphi_{t-r,R}(z)^2\,dz\,dr
	\Bigg]^{1/2}ds \\
	&\le \frac{C}{R}\int_0^t
	\Bigg[
	\int_s^t 	\|\varphi_{t-r,R}\|_{L^2(\mathbb R)}^2\,dr
	\Bigg]^{1/2}ds \\
	&\le \frac{C}{R}\int_0^t
	\left[
	R(t-s)
	\right]^{\frac 1 2}ds
	\,\leq\, \frac{C}{\sqrt{R}}.
\end{aligned}
\]

Consequently,
\[
A_2\to0
\qquad\text{as }R\to\infty.
\]

For \(A_1\), we first estimate
\[
\operatorname{Cov}\left(
\sigma^2(u(s,y)),\sigma^2(u(s,y'))
\right).
\]
By the Poincar\'e-type covariance inequality \eqref{eq:Poincare-type-ineq}
and the fact that \(D_{r,z}u(s,y)=0\) for \(r>s\), we have
\[
\begin{aligned}
	&\left|
	\operatorname{Cov}\left(
	\sigma^2(u(s,y)),\sigma^2(u(s,y'))
	\right)
	\right|\\
	&\quad\le
	\int_0^s\int_{\mathbb R}
	\left\|
	D_{r,z}\big(\sigma^2(u(s,y))\big)
	\right\|_2
	\left\|
	D_{r,z}\big(\sigma^2(u(s,y'))\big)
	\right\|_2
	\,dz\,dr.
\end{aligned}
\]
By the product rule and  chain rule for Lipschitz functions (see Proposition 1.2.4 in Nualart~\cite{Nualart2006}), there exists a random
field \(\Sigma\), satisfying \(|\Sigma(s,y)|\le L\), such that
\[
D_{r,z}\big(\sigma^2(u(s,y))\big)
=
2\sigma(u(s,y))\Sigma(s,y)D_{r,z}u(s,y).
\]
Consequently, by H\"older's inequality,
\[
\begin{aligned}
	\left\|
	D_{r,z}\big(\sigma^2(u(s,y))\big)
	\right\|_2
	&\le
	2L\|\sigma(u(s,y))\|_4
	\|D_{r,z}u(s,y)\|_4\\
	&\le
	2LK_4(t)\|D_{r,z}u(s,y)\|_4.
\end{aligned}
\]
Therefore,
\[
\begin{aligned}
	&\left|
	\operatorname{Cov}\left(
	\sigma^2(u(s,y)),\sigma^2(u(s,y'))
	\right)
	\right|\\
	&\quad\le
	4L^2K_4^2(t)
	\int_0^s\int_{\mathbb R}
	\|D_{r,z}u(s,y)\|_4
	\|D_{r,z}u(s,y')\|_4
	\,dz\,dr.
\end{aligned}
\]
Applying Lemma~\ref{le:malliavin}, we obtain
\[
\begin{aligned}
	&\left|
	\operatorname{Cov}\left(
	\sigma^2(u(s,y)),\sigma^2(u(s,y'))
	\right)
	\right|\\
	&\quad\le
	C\int_0^s\int_{\mathbb R}
	(s-r)^{-\beta/\alpha}
	G_{s-r}(y-z)^{1/2}
	G_{s-r}(y'-z)^{1/2}
	\,dz\,dr.
\end{aligned}
\]

Then we get
\[
\begin{aligned}
	A_1
	&\le \frac{C}{R}\int_0^t
	\Bigg[
	\int_0^s (s-r)^{-\frac \beta\alpha}
	\int_{\mathbb R^2}
	\varphi_{t-s,R}(y)^2\varphi_{t-s,R}(y')^2 \\
	&\hspace{2.8cm}\times
	\Bigg(
	\int_{\mathbb R}G_{s-r}(z-y)^{1/2}G_{s-r}(z-y')^{1/2}\,dz
	\Bigg)
	\,dy\,dy'\,dr
	\Bigg]^{1/2}ds .
\end{aligned}
\]

Since \(0\le \varphi_{t-s,R}\le 1\), we have
	\[
	\int_{\mathbb R}\varphi_{t-s,R}(y)^4\,dy
	\le
	\int_{\mathbb R}\varphi_{t-s,R}(y)\,dy
	=2R.
	\]
	Therefore, by using Cauchy--Schwarz inequality and Young's inequality, we get
	\[
	\begin{aligned}
		&\int_{\mathbb R^2}
		\varphi_{t-s,R}(y)^2\varphi_{t-s,R}(y')^2
		\left(
		\int_{\mathbb R}
		G_{s-r}(z-y)^{1/2}G_{s-r}(z-y')^{1/2}\,dz
		\right)
		dy\,dy' \\
		&\quad =
		\int_{\mathbb R}
		\varphi_{t-s,R}(y)^2
		\left[
		\big(G_{s-r}^{1/2}*G_{s-r}^{1/2}\big)
		*
		\varphi_{t-s,R}^2
		\right](y)\,dy \\
		&\quad \le
		\|\varphi_{t-s,R}^2\|_{L^2(\mathbb R)}
		\left\|
		\big(G_{s-r}^{1/2}*G_{s-r}^{1/2}\big)
		*
		\varphi_{t-s,R}^2
		\right\|_{L^2(\mathbb R)} \\
		&\quad \le
		\|\varphi_{t-s,R}^2\|_{L^2(\mathbb R)}^2
		\left\|G_{s-r}^{1/2}*G_{s-r}^{1/2}\right\|_{L^1(\mathbb R)} \\
		&\quad \le
		C R (s-r)^{\beta/\alpha}.
	\end{aligned}
	\]
	
	Substituting this estimate into the bound for \(A_1\), we obtain
	\[
	\begin{aligned}
		A_1
		&\le
		\frac{C}{R}\int_0^t
		\left[
		R\int_0^s
		(s-r)^{-\beta/\alpha}(s-r)^{\beta/\alpha}\,dr
		\right]^{1/2}ds \\
		&=
		\frac{C}{R}\int_0^t
		(Rs)^{1/2}\,ds
		\le
		\frac{C}{\sqrt R}.
	\end{aligned}
	\]
	Consequently,
	\[
	A_1\to0
	\qquad\text{as }R\to\infty.
	\]
	
	Combining the estimates for \(A_1\) and \(A_2\) with the Malliavin--Stein bound, we conclude that
	\[
	d_{\mathrm{TV}}(F_R(t),Z)
	\le
	2\sqrt{\operatorname{Var}\left(\langle DF_R(t),v_R\rangle_{\mathcal H}\right)}
	\le
	\frac{C}{\sqrt R}.
	\]
	Choose \(R_0>0\) such that the preceding estimates hold for
	\(R\geq R_0\). For \(0<R<R_0\), the conclusion follows by enlarging
	the constant and using \(d_{\mathrm{TV}}(F_R(t),Z)\leq1\). This completes the proof.                                                             

\section{Proof of Theorem \ref{FCLT}}\label{5}
To prove Theorem~\ref{FCLT}, it suffices to establish the convergence of
finite-dimensional distributions and tightness. The tightness will follow from
Proposition~\ref{pro:time-increment-spatial-average} as well as the
Kolmogorov-Chentsov criterion.

\textbf{Step 1: Tightness.}
\begin{proposition}
	\label{pro:time-increment-spatial-average}
	Assume that $u_0\equiv 1$ and that $\sigma$ is Lipschitz continuous.
	Then, for every $p\geq 1$ and every $T>0$, there exists a constant
	$C=C(p,T)>0$ such that, for all $R>0$, and all $0\leq s<t\leq T$,
	\[
	\mathbb E\left[
	\left|G_R(t)-G_R(s)\right|^p
	\right]
	\leq
	C R^{p/2}(t-s)^{p/2}.
	\]
	Equivalently,
	\[
	\mathbb E\left[
	\left|
	\frac{G_R(t)-G_R(s)}{\sqrt R}
	\right|^p
	\right]
	\leq
	C(t-s)^{p/2}.
	\]
\end{proposition}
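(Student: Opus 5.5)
We decompose the increment using the mild formulation and stochastic Fubini, exactly as in the proof of Proposition~\ref{pro:cov_GR}:
\[
G_R(t)-G_R(s)
=\int_s^t\int_{\mathbb R}\varphi_{t-r,R}(y)\sigma(u(r,y))\,W(dr,dy)
+\int_0^s\int_{\mathbb R}\bigl(\varphi_{t-r,R}(y)-\varphi_{s-r,R}(y)\bigr)\sigma(u(r,y))\,W(dr,dy)
=:I_1+I_2 .
\]
It suffices to treat \(p\ge2\), since the case \(1\le p<2\) follows by Jensen. For each term we will apply the Burkholder--Davis--Gundy inequality followed by Minkowski's inequality, together with the uniform bound \(\sup\|\sigma(u(r,y))\|_p\le K_p(T)\). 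This reduces both moments to deterministic \(L^2\) norms.

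For \(I_1\), we use \eqref{eq:norm-phi}:
\[
\|I_1\|_p^2\le C K_p^2(T)\int_s^t\|\varphi_{t-r,R}\|_{L^2(\mathbb R)}^2\,dr\le C R(t-s).
\]

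For \(I_2\), we obtain
\[
\|I_2\|_p^2\le C\int_0^s\|\varphi_{t-r,R}-\varphi_{s-r,R}\|_{L^2(\mathbb R)}^2\,dr.
\]
The key step is then to show
\[
\int_0^s\|\varphi_{t-r,R}-\varphi_{s-r,R}\|_{L^2}^2\,dr\le CR(t-s).
\]
Since \(\varphi_{\tau,R}=\mathbf 1_{[-R,R]}*G_\tau\), we pass to Fourier variables using \eqref{eq:G-Fourier}:
\[
\|\varphi_{t-r,R}-\varphi_{s-r,R}\|_{L^2}^2=\frac1{2\pi}\int_{\mathbb R}\Bigl(\frac{2\sin(R\xi)}{\xi}\Bigr)^2\bigl(E_\beta(-\nu|\xi|^\alpha(t-r)^\beta)-E_\beta(-\nu|\xi|^\alpha(s-r)^\beta)\bigr)^2d\xi .
\]
Write \(h=t-s\) and \(a=s-r\). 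Because \(x\mapsto E_\beta(-x)\) is completely monotone, it is decreasing and takes values in \([0,1]\), so the squared difference is at most \(\bigl|E_\beta(-\nu|\xi|^\alpha a^\beta)-E_\beta(-\nu|\xi|^\alpha(a+h)^\beta)\bigr|\), and this quantity is nonnegative. Integrating in \(a\in[0,s]\) first and using Fubini, the \(a\)-integral telescopes:
\[
\int_0^s\bigl[E_\beta(-\nu|\xi|^\alpha a^\beta)-E_\beta(-\nu|\xi|^\alpha(a+h)^\beta)\bigr]da=\int_0^h E_\beta(-\nu|\xi|^\alpha a^\beta)\,da-\int_s^{s+h}E_\beta(-\nu|\xi|^\alpha a^\beta)\,da\le h.
\]
Combined with \(\frac1{2\pi}\int(2\sin(R\xi)/\xi)^2d\xi=\|\mathbf 1_{[-R,R]}\|_{L^2}^2=2R\), this gives the bound \(2R(t-s)\).

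Combining the two estimates yields \(\|G_R(t)-G_R(s)\|_p^2\le CR(t-s)\), which is the claim. The main obstacle is this \(I_2\) estimate: the missing semigroup property prevents writing \(\varphi_{t-r}\) as \(\varphi_{s-r}\) smoothed by \(G_{t-s}\). If the Fourier monotonicity trick above fails, the fallback is a pointwise Hölder-type bound \(|E_\beta(-x(a+h)^\beta)-E_\beta(-xa^\beta)|\le C\min(1,x h a^{\beta-1})\) combined with the decay of \(E_\beta\). That route is messier, whereas the telescoping identity circumvents it cleanly.
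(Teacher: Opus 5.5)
Your proposal is correct and follows essentially the same route as the paper. Both arguments split the increment into the $[s,t]$ and $[0,s]$ stochastic integrals, apply BDG, and use Plancherel on the second piece. Both then bound the squared Mittag--Leffler difference by the difference itself and finish with the telescoping time integral. The only cosmetic difference is that you bound $\|\varphi_{t-r,R}\|_{L^2(\mathbb R)}^2$ via \eqref{eq:norm-phi} instead of Young's inequality.
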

\begin{proof}
	Since $u_0\equiv 1$, the mild solution gives
	\[
	u(t,x)-1
	=
	\int_0^t\int_{\mathbb R}
	G_{t-r}(x-y)\sigma(u(r,y))\,W(dr,dy).
	\]
	Recall that
	\[
	\varphi_{\tau,R}(y):=\int_{-R}^{R}G_\tau(x-y)\,dx .
	\]
	Then, by stochastic Fubini's theorem,
	\[
	G_R(t)
	=
	\int_0^t\int_{\mathbb R}
	\varphi_{t-r,R}(y)\sigma(u(r,y))\,W(dr,dy).
	\]
	Hence, for $0\leq s<t\leq T$,
	\[
	G_R(t)-G_R(s)=B_{R,1}(s,t)+B_{R},2(s,t),
	\]
	where
	\[
	B_{R,1}(s,t)
	=
	\int_0^s\int_{\mathbb R}
	\left[
	\varphi_{t-r,R}(y)-\varphi_{s-r,R}(y)
	\right]
	\sigma(u(r,y))\,W(dr,dy),
	\]
	and
	\[
	B_{R,2}(s,t)
	=
	\int_s^t\int_{\mathbb R}
	\varphi_{t-r,R}(y)\sigma(u(r,y))\,W(dr,dy).
	\]
	
	We first estimate $B_{R},2(s,t)$. By the Burkholder--Davis--Gundy
	inequality and the uniform moment bounds for $u$, we have, for $p\geq2$,
	\[
	\|B_{R,2}(s,t)\|_{p}^{2}
	\leq
	C
	\int_s^t\int_{\mathbb R}
	|\varphi_{t-r,R}(y)|^2\,dy\,dr .
	\]
	Since
	\[
	\varphi_{t,R}(\cdot)=\mathbf 1_{[-R,R]}*G_t,
	\]
	Young's inequality and $\|G_t\|_{L^1(\mathbb R)}=1$ yield
	\[
	\|\varphi_{t,R}(\cdot)\|_{L^2(\mathbb R)}
	\leq
	\|\mathbf 1_{[-R,R]}\|_{L^2(\mathbb R)}
	\|G_t\|_{L^1(\mathbb R)}
	\leq C R^{1/2}.
	\]
	Therefore,
	\[
	\|B_{R,2}(s,t)\|_{p}^{2}
	\leq
	C R(t-s).
	\]
	That is,
	\[
	\|B_{R,2}(s,t)\|_{p}
	\leq
	C R^{1/2}(t-s)^{1/2}.
	\]
	
	It remains to estimate $B_{R,1}(s,t)$. By the BDG inequality again,
	\[
	\|B_{R,1}(s,t)\|_{p}^{2}
	\leq
	C
	\int_0^s\int_{\mathbb R}
	\left|
	\varphi_{t-r,R}(y)-\varphi_{s-r,R}(y)
	\right|^2dy\,dr .
	\]
	We claim that
	\[
	\int_0^s
	\left\|
	\varphi_{t-r,R}(\cdot)-\varphi_{s-r,R}(\cdot)
	\right\|_{L^2(\mathbb R)}^2\,dr
	\leq
	C R(t-s).
	\]
	Indeed, by the Parseval-Plancherel identity,
	\[
	\begin{aligned}
		&\int_0^s
		\left\|
		\varphi_{t-r,R}(\cdot)-\varphi_{s-r,R}(\cdot)
		\right\|_{L^2(\mathbb R)}^2\,dr \\
		&\quad
		=
		C\int_{\mathbb R}
		\left|
		\widehat{\mathbf 1}_{[-R,R]}(\xi)
		\right|^2
		\int_0^s
		\left|
		\widehat G_{t-r}(\xi)-\widehat G_{s-r}(\xi)
		\right|^2
		dr\,d\xi .
	\end{aligned}
	\]

By \eqref{eq:G-Fourier} and making the change of variables \(\tau=s-r\), we have
		\[
		\begin{aligned}
			&\int_0^s
			\left|
			\widehat G_{t-r}(\xi)-\widehat G_{s-r}(\xi)
			\right|^2\,dr  \\
			&\quad =
			\int_0^s
			\left|
			E_\beta\left(-\nu|\xi|^\alpha(\tau+t-s)^\beta\right)
			-
			E_\beta\left(-\nu|\xi|^\alpha\tau^\beta\right)
			\right|^2\,d\tau .
		\end{aligned}
		\]
		Since \(E_\beta(-x)\) is completely monotone on \([0,\infty)\), it is
		decreasing. Moreover, by the uniform estimate of the Mittag--Leffler
		function, \(0\le E_\beta(-x)\le 1\) for \(x\ge0\). Therefore, for every
		\(\tau\ge0\),
		\[
		0\le
		E_\beta\left(-\nu|\xi|^\alpha\tau^\beta\right)
		-
		E_\beta\left(-\nu|\xi|^\alpha(\tau+t-s)^\beta\right)
		\le 1.
		\]
		It follows that
		\[
		\begin{aligned}
			&\int_0^s
			\left|
			E_\beta\left(-\nu|\xi|^\alpha(\tau+t-s)^\beta\right)
			-
			E_\beta\left(-\nu|\xi|^\alpha\tau^\beta\right)
			\right|^2\,d\tau \\
			&\quad \le
			\int_0^s
			\left[
			E_\beta\left(-\nu|\xi|^\alpha\tau^\beta\right)
			-
			E_\beta\left(-\nu|\xi|^\alpha(\tau+t-s)^\beta\right)
			\right]\,d\tau \\
			&\quad =
			\int_0^s
			E_\beta\left(-\nu|\xi|^\alpha\tau^\beta\right)\,d\tau
			-
			\int_{t-s}^{t}
			E_\beta\left(-\nu|\xi|^\alpha \tau^\beta\right)\,d\tau \\
			&\quad =
			\int_0^{t-s}
			E_\beta\left(-\nu|\xi|^\alpha \tau^\beta\right)\,d\tau
			-
			\int_s^t
			E_\beta\left(-\nu|\xi|^\alpha \tau^\beta\right)\,d\tau \\
			&\quad \le
			\int_0^{t-s}
			E_\beta\left(-\nu|\xi|^\alpha \tau^\beta\right)\,d\tau
			\le t-s.
		\end{aligned}
		\]
		Consequently,
		\[
		\int_0^s
		\left|
		\widehat G_{t-r}(\xi)-\widehat G_{s-r}(\xi)
		\right|^2\,dr
		\le t-s,
		\]
		uniformly in \(\xi\).

	By Plancherel's identity again,
	\[
	\int_{\mathbb R}
	\left|
	\widehat{\mathbf 1}_{[-R,R]}(\xi)
	\right|^2d\xi
	\leq
	C\|\mathbf 1_{[-R,R]}\|_{L^2(\mathbb R)}^2
	\leq
	C R.
	\]
	Thus,
	\[
	\int_0^s
	\left\|
	\varphi_{t-r,R}(\cdot)-\varphi_{s-r,R}(\cdot)
	\right\|_{L^2(\mathbb R)}^2\,dr
	\leq
	C R(t-s).
	\]
	It follows that
	\[
	\|B_{R,1}(s,t)\|_{p}
	\leq
	C R^{1/2}(t-s)^{1/2}.
	\]
	
	Combining the estimates of $B_{R,1}(s,t)$ and $B_{R,2}(s,t)$, we obtain
	\[
	\|G_R(t)-G_R(s)\|_p
	\leq
	C R^{1/2}(t-s)^{1/2}.
	\]
		This proves the desired estimate for $p\geq2$. The case $1\leq p<2$
	then follows from Lyapunov's inequality.
\end{proof}

\textbf{Step 2: Convergence of finite-dimensional distributions.}
\begin{proposition}
	\label{pro:fidi-convergence}	
	For any integer $m\geq1$ and any
	$0\leq t_1<\cdots<t_m\leq T$, we have
	\[
	\left(\frac{1}{\sqrt R}G_R(t_1),\ldots,\frac{1}{\sqrt R}G_R(t_m)\right)
	\xrightarrow{\;\mathrm{d}\;}
	\left(Z(t_1),\ldots,Z(t_m)\right),
	\]
	where $\{Z(t):t\in[0,T]\}$ is a centered Gaussian process with covariance
	function
	\[
	\mathbb E[Z(t)Z(s)]
	=
	2\int_0^{t\wedge s}\xi(r)\,dr.
	\]
	Equivalently,
	\[
	Z(t)=\int_0^t\sqrt{2\xi(r)}\,dB_r.
	\]
\end{proposition}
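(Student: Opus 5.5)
We will apply Proposition~\ref{pro:con-for-finite-distri} to the vector $F_R=(F_R^{(1)},\dots,F_R^{(m)})$ with $F_R^{(i)}:=R^{-1/2}G_R(t_i)$. As in Section~\ref{4}, the stochastic Fubini theorem gives $F_R^{(i)}=\delta(v_R^{(i)})$ with
\[
v_R^{(i)}(s,y)=\mathbf 1_{[0,t_i]}(s)\,R^{-1/2}\varphi_{t_i-s,R}(y)\,\sigma(u(s,y)).
\]
This field is predictable and square integrable by \eqref{eq:norm-phi}, and $F_R^{(i)}\in\mathbb D^{1,2}$ by Lemma~\ref{lem:Malliavin-derivative}. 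Set $C_{i,j}=2\int_0^{t_i\wedge t_j}\xi(r)\,dr$. Then for every $h\in C^2$ with bounded second derivatives,
\[
|\mathbb E h(F_R)-\mathbb E h(Z)|\le \tfrac12\|h''\|_\infty \Big(\sum_{i,j}\mathbb E\big[(C_{i,j}-\langle DF_R^{(i)},v_R^{(j)}\rangle_{\mathcal H})^2\big]\Big)^{1/2}.
\]
Since this class of test functions determines convergence in law, it suffices to show that each summand tends to $0$. We expand
\[
\mathbb E\big[(C_{i,j}-\langle DF_R^{(i)},v_R^{(j)}\rangle)^2\big]=\big(C_{i,j}-\mathbb E\langle DF_R^{(i)},v_R^{(j)}\rangle\big)^2+\operatorname{Var}\langle DF_R^{(i)},v_R^{(j)}\rangle .
\]

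\textbf{Mean term.} By duality, $\mathbb E\langle DF_R^{(i)},v_R^{(j)}\rangle=\mathbb E[F_R^{(i)}\delta(v_R^{(j)})]=R^{-1}\Cov(G_R(t_i),G_R(t_j))$. By Proposition~\ref{pro:cov_GR} this converges to $C_{i,j}$, so the first term vanishes as $R\to\infty$.

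\textbf{Variance term.} We insert \eqref{eq:Malliavin-derivative} for $D_{s,y}u(t_i,x)$ and integrate over $x\in[-R,R]$. This gives, with $s\le t_i\wedge t_j$,
\[
\begin{aligned}
\langle DF_R^{(i)},v_R^{(j)}\rangle
&=\frac1R\int_0^{t_i\wedge t_j}\!\!\int_{\mathbb R}\varphi_{t_i-s,R}(y)\varphi_{t_j-s,R}(y)\sigma^2(u(s,y))\,dy\,ds\\
&\quad+\frac1R\int_0^{t_i\wedge t_j}\!\!\int_{\mathbb R}\varphi_{t_j-s,R}(y)\sigma(u(s,y))\int_s^{t_i}\!\!\int_{\mathbb R}\varphi_{t_i-r,R}(z)\Sigma(r,z)D_{s,y}u(r,z)\,W(dr,dz)\,dy\,ds .
\end{aligned}
\]
Using \eqref{eq:sqrt-Var-sqrt}, the square root of the variance is bounded by $\sqrt2(A_1^{ij}+A_2^{ij})$. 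These are the same expressions as $A_1,A_2$ in Section~\ref{4}, except that $\sigma_R^2$ is replaced by $R$ and one factor $\varphi_{t-s,R}$ is replaced by $\varphi_{t_j-s,R}$.

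The estimates of Section~\ref{4} used only three facts: $0\le\varphi_{\tau,R}\le1$, $\int\varphi_{\tau,R}=2R$ (hence $\|\varphi^2_{\tau,R}\|_{L^2}^2\le 2R$), and \eqref{eq:bound of D_{s,y}}. All three are uniform in $\tau$. For $A_1^{ij}$ we bound $\varphi_{t_i-s,R}(y)\varphi_{t_j-s,R}(y)\le\max(\varphi_{t_i-s,R},\varphi_{t_j-s,R})^2$, or apply Cauchy--Schwarz to separate the two factors. Then we use \eqref{eq:Poincare-type-ineq}, Lemma~\ref{le:malliavin} and Young's inequality with Lemma~\ref{lem:basic-G-estimates}\ref{it:G-property-4} exactly as before, to get $A_1^{ij}\le CR^{-1/2}$. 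For $A_2^{ij}$ we bound $\int\varphi_{t_j-s,R}(y)G_{r-s}(z-y)^{1/2}dy\le C(r-s)^{\beta/(2\alpha)}$ via Lemma~\ref{lem:basic-G-estimates}\ref{it:G-property-3}. This leaves $R^{-1}\int_0^t\big(\int_s^{t_i}\|\varphi_{t_i-r,R}\|_{L^2}^2dr\big)^{1/2}ds\le CR^{-1/2}$.

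Hence every summand is $O(R^{-1/2})$ plus a term that vanishes, and the finite-dimensional laws converge. The identification $Z(t)=\int_0^t\sqrt{2\xi(r)}\,dB_r$ follows because this Gaussian process has the stated covariance by It\^o's isometry; note $\xi\ge0$ is bounded by \eqref{eq:solution-m-bound}.

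The main obstacle is the cross-time bookkeeping in $A_1^{ij}$ and $A_2^{ij}$. The two $\varphi$ factors now carry different time arguments, so we must check that the symmetric Cauchy--Schwarz/Young steps of Section~\ref{4} still close, using only the uniform-in-$\tau$ bounds on $\varphi_{\tau,R}$. We also need that all constants depend only on $T$ and not on $R$.
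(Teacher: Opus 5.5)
Your proposal is correct and follows essentially the same route as the paper: you use the multivariate Malliavin--Stein bound of Proposition~\ref{pro:con-for-finite-distri}, handle the mean term by duality and Proposition~\ref{pro:cov_GR}, and split the variance term into the two pieces that are estimated exactly like $A_1$ and $A_2$ in Section~\ref{4}. You also make explicit what the paper leaves implicit, namely that those estimates use only $0\le\varphi_{\tau,R}\le1$, $\int_{\mathbb R}\varphi_{\tau,R}=2R$ and the derivative bound from Lemma~\ref{le:malliavin}, all uniform in $\tau$, so the mixed-time kernels cause no extra difficulty.
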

\begin{proof}
	For \(i=1,\ldots,m\), consider
	\[
	F_R^{(i)}:=\frac{1}{\sqrt R}G_R(t_i)=\frac{1}{\sqrt{R}}\left(\int_{-R}^{R}u(t_i,x)dx-2R\right),
	\]
	and set
\[
F_R:=\left(F_R^{(1)},\ldots,F_R^{(m)}\right).
\]
Then \(F_R^{(i)}=\delta(v_R^{(i)})\), where
	\[
	v_R^{(i)}(s,y)
	=
	\mathbf 1_{[0,t_i]}(s)\frac{\sigma(u(s,y))}{\sqrt R}
	\varphi_{t_i-s,R}(y).
	\]
	Let \(Z=(Z(t_1),\ldots,Z(t_m))\) be the centered Gaussian vector with covariance
	matrix
	\[
	C_{ij}:=\mathbb E[Z(t_i)Z(t_j)]
	=
	2\int_0^{t_i\wedge t_j}\xi(r)\,dr .
	\]
	By the multivariate Malliavin--Stein bound in Proposition \ref{pro:con-for-finite-distri}, for every \(h\in C^2(\mathbb R^m)\)
	with bounded second partial derivatives,
	\[
	\left|\mathbb E h(F_R)-\mathbb E h(Z)\right|
	\le
	\frac12\|h''\|_\infty
	\left(
	\sum_{i,j=1}^m
	\mathbb E\left[
	\left(
	C_{ij}
	-
	\langle DF_R^{(i)},v_R^{(j)}\rangle_{\mathcal H}
	\right)^2
	\right]
	\right)^{1/2}.
	\]
	Therefore, it suffices to prove that, for every \(1\leq i,j\leq m\),
	\[
	\left\langle
	DF_R^{(i)},v_R^{(j)}
	\right\rangle_{\mathcal H}
	\longrightarrow
	C_{ij}
	\quad\text{in }L^2(\Omega).
	\]
	Indeed,
	\[
	\begin{aligned}
		&\mathbb E\left[
		\left(
		C_{ij}
		-
		\left\langle
		DF_R^{(i)},v_R^{(j)}
		\right\rangle_{\mathcal H}
		\right)^2
		\right]\\
		&\quad=
		\left(
		C_{ij}
		-
		\mathbb E\left[
		\left\langle
		DF_R^{(i)},v_R^{(j)}
		\right\rangle_{\mathcal H}
		\right]
		\right)^2+
		\operatorname{Var}\left(
		\left\langle
		DF_R^{(i)},v_R^{(j)}
		\right\rangle_{\mathcal H}
		\right).
	\end{aligned}
	\]
	
	Since \(F_R^{(i)}=\delta(v_R^{(i)})\), the duality relation between
	the Malliavin derivative and the divergence operator gives
	\[
	\begin{aligned}
		\mathbb E\left[
		\left\langle
		DF_R^{(i)},v_R^{(j)}
		\right\rangle_{\mathcal H}
		\right]
		&=
		\mathbb E\left[
		F_R^{(i)}F_R^{(j)}
		\right]\\
		&=
		\frac{1}{R}
		\operatorname{Cov}\bigl(G_R(t_i),G_R(t_j)\bigr).
	\end{aligned}
	\]
	Therefore, by Proposition~\ref{pro:cov_GR},
	\[
	\mathbb E\left[
	\left\langle
	DF_R^{(i)},v_R^{(j)}
	\right\rangle_{\mathcal H}
	\right]
	\longrightarrow
	2\int_0^{t_i\wedge t_j}\xi(r)\,dr
	=
	C_{ij}.
	\]
	
	It remains to prove that
\[
\operatorname{Var}\left(
\left\langle DF_R^{(i)},v_R^{(j)}
\right\rangle_{\mathcal H}
\right)
\longrightarrow0.
\]
For this purpose, we use the same decomposition as in the proof of
Theorem~\ref{QCLT}. More precisely,
	\[
	\langle DF_R^{(i)},v_R^{(j)}\rangle_{\mathcal H}
	=
	I_{1,i,j}(R)+I_{2,i,j}(R),
	\]
	where
	\[
		I_{1,i,j}(R)
		:=
		\frac1R
		\int_0^{t_i\wedge t_j}
		\int_{\mathbb R}
		\int_{-R}^{R}\int_{-R}^{R}
		G_{t_i-s}(x_1-y)G_{t_j-s}( x_2-y) 
		\sigma^2(u(s,y))
		\,dx_1\,dx_2\,dy\,ds,
	\]
	and
	\[
	\begin{aligned}
		I_{2,i,j}(R)
		&:=
		\frac1R
		\int_0^{t_i\wedge t_j}
		\int_{\mathbb R}
		\int_{-R}^{R}\int_{-R}^{R}
		G_{t_j-s}( x_1-y)\sigma(u(s,y))  \\
		&\quad\times
		\left(
		\int_s^{t_i}\int_{\mathbb R}
		G_{t_i-r}(x_2-z)\Sigma(r,z)D_{s,y}u(r,z)\,
		W(dr,dz)
		\right)
		\,dx_1\,dx_2\,dy\,ds .
	\end{aligned}
	\]
Then using \eqref{eq:sqrt-Var-sqrt}, we get
\[
	\sqrt{
		\operatorname{Var}\left(
		\left\langle
		DF_R^{(i)},v_R^{(j)}
		\right\rangle_{\mathcal H}
		\right)
	}
	\leq
	\sqrt{
		\operatorname{Var}\bigl(I_{1,i,j}(R)\bigr)
	}
	+
	\sqrt{
		\operatorname{Var}\bigl(I_{2,i,j}(R)\bigr)
	}.
\]
Applying the same arguments as those used to estimate \(A_1\) and
\(A_2\) in the proof of Theorem~\ref{QCLT} to the mixed-time kernels
appearing above, we obtain
\[
\sqrt{\operatorname{Var}(I_{1,i,j}(R))}
\leq
\frac{C_T}{\sqrt R},
\qquad
\sqrt{\operatorname{Var}(I_{2,i,j}(R))}
\leq
\frac{C_T}{\sqrt R}.
\]
Here \(C_T\) is independent of \(R,i\), and \(j\), provided that
\(t_i,t_j\in[0,T]\).

Hence,
\[
\operatorname{Var}\left(
\left\langle
DF_R^{(i)},v_R^{(j)}
\right\rangle_{\mathcal H}
\right)
\leq
\frac{C_T}{R}
\longrightarrow0.
\]
Consequently,
\[
\mathbb E\left[
\left(
C_{ij}
-
\left\langle
DF_R^{(i)},v_R^{(j)}
\right\rangle_{\mathcal H}
\right)^2
\right]
\longrightarrow0.
\]
This completes the proof.
\end{proof}
By Remark~\ref{rem:nondegenerate-variance}, the limiting variance is strictly
positive for every \(t>0\). 
Therefore, Theorem \ref{FCLT} follows by combining Proposition \ref{pro:time-increment-spatial-average} and Proposition \ref{pro:fidi-convergence}.


\appendix
\section{Proof of Lemma \ref{le:malliavin}}\label{6}
	Fix \((s,y)\) with \(0\le s<T\) and \(y\in\mathbb R\). For \(t>0\) and \(x\in\mathbb R\),
	define
	\[
	g(t,x):=\|D_{s,y}u(s+t,y+x)\|_p^2.
	\]
	Then \eqref{eq:malliavin-iterate-G} can be rewritten as
	\begin{equation}\label{eq:g-recursion}
		g(t,x)
		\le
		C G_t^2(x)
		+
		C\int_0^t\int_{\mathbb R}
		G_{t-r}^2(x-\xi)\,g(r,\xi)\,d\xi\,dr,
		\qquad 0<t\le T-s.
	\end{equation}
	What we need to prove is that
	\[
	g(t,x)\le C\,t^{-\beta/\alpha}G_t(x).
	\]
	
	By Lemma~A.1 in \cite{chen2019}, it suffices to establish the desired estimate
	in the case when \eqref{eq:g-recursion} holds with equality. 
	
	We now introduce the Picard sequence \((g_n)_{n\ge0}\) by
	\[
	g_0(t,x):=C\,G_t^2(x),
	\]
	and, for \(n\ge0\),
	\begin{equation}\label{eq:gn-recursion}
		g_{n+1}(t,x):=
		C\,G_t^2(x)
		+
		C\int_0^t\int_{\mathbb R}
		G_{t-r}^2(x-\xi)\,g_n(r,\xi)\,d\xi\,dr.
	\end{equation}
	Since the kernel \(G\) is nonnegative, \((g_n(t,x))_{n\ge0}\) is nondecreasing
	for every fixed \((t,x)\in(0,T-s]\times\mathbb R\).
	
	We claim that there exists a sequence of nonnegative functions \((A_n)_{n\ge0}\)
	such that
	\begin{equation}\label{eq:gn-majorant}
		g_n(t,x)\le A_n(t)\,G_t(x),
		\qquad n\ge0,
	\end{equation}
	where
	\[
	A_0(t)=Ct^{-\beta/\alpha},
	\]
	and, for \(n\ge0\),
	\begin{equation}\label{eq:An-recursion}
		A_{n+1}(t)=C t^{-\beta/\alpha}
		+
		C\int_0^t (t-r)^{-\beta/\alpha}A_n(r)\,dr.
	\end{equation}
	
	We first verify \eqref{eq:gn-majorant} for \(n=0\). By Lemma~\ref{le:G-sup},
	\[
	G_t(x)\le C t^{-\beta/\alpha},
	\]
	so that
	\[
	g_0(t,x)=C G_t^2(x)\le C t^{-\beta/\alpha}G_t(x).
	\]
	Hence \eqref{eq:gn-majorant} holds with \(A_0(t)=Ct^{-\beta/\alpha}\).
	
	Suppose now that \eqref{eq:gn-majorant} holds for some \(n\ge0\). Then, using
	\eqref{eq:gn-recursion},
	\begin{align*}
		g_{n+1}(t,x)
		&=
		C G_t^2(x)
		+
		C\int_0^t\int_{\mathbb R}
		G_{t-r}^2(x-\xi)\,g_n(r,\xi)\,d\xi\,dr \\
		&\le
		C G_t^2(x)
		+
		C\int_0^t\int_{\mathbb R}
		G_{t-r}^2(x-\xi)\,A_n(r)G_r(\xi)\,d\xi\,dr \\
		&\le
		C t^{-\beta/\alpha}G_t(x)
		+
		C\int_0^t A_n(r)
		\left(
		\int_{\mathbb R}G_{t-r}^2(x-\xi)G_r(\xi)\,d\xi
		\right)dr.
	\end{align*}
	By Corollary~\ref{cor:G2G},
	\[
	\int_{\mathbb R}G_{t-r}^2(x-\xi)G_r(\xi)\,d\xi
	\le C (t-r)^{-\beta/\alpha}G_t(x).
	\]
	Therefore,
	\[
	g_{n+1}(t,x)
	\le
	G_t(x)\left[
	C t^{-\beta/\alpha}
	+
	C\int_0^t (t-r)^{-\beta/\alpha}A_n(r)\,dr
	\right]
	=
	A_{n+1}(t)\,G_t(x),
	\]
	which proves \eqref{eq:gn-majorant}.
	
	It remains to estimate \((A_n)_{n\ge0}\). Set
	\[
	\rho:=\frac{\beta}{\alpha}\in(0,1).
	\]
	Then \eqref{eq:An-recursion} becomes
	\[
	A_{n+1}(t)=C t^{-\rho}+C\int_0^t (t-r)^{-\rho}A_n(r)\,dr.
	\]
	By iterating this relation and using the Beta function identity, one obtains
	\[
	A_n(t)\le
	C t^{-\rho}
	\sum_{k=0}^n
	\frac{\big(C\Gamma(1-\rho)t^{1-\rho}\big)^k}
	{\Gamma((k+1)(1-\rho))}.
	\]
	Equivalently,
	\[
	A_n(t)\le
	C t^{-\rho}
	E_{1-\rho,\,1-\rho}\!\left(C\Gamma(1-\rho)t^{1-\rho}\right),
	\]
	where \(E_{a,b}\) denotes the two-parameter Mittag--Leffler function. Since
	\(0<t\le T\), the quantity
	\[
	E_{1-\rho,\,1-\rho}\!\big(C\Gamma(1-\rho)t^{1-\rho}\big)
	\]
	is uniformly bounded on \([0,T]\). Consequently,
	\[
	A_n(t)\le C_T t^{-\rho},
	\qquad 0<t\le T,
	\]
	with \(C_T\) independent of \(n\).
	
	Combining this with \eqref{eq:gn-majorant}, we obtain
	\[
	g_n(t,x)\le C_T t^{-\rho}G_t(x),
	\qquad 0<t\le T,\ x\in\mathbb R.
	\]
	Letting \(n\to\infty\), we conclude that
	\[
	g(t,x)\le C_T t^{-\rho}G_t(x).
	\]
	Recalling the definition of \(g\), this is exactly
	\[
	\|D_{s,y}u(r,z)\|_p^2
	\le C_T (r-s)^{-\beta/\alpha}G_{r-s}(z-y),
	\]
	which proves \eqref{eq:bound-D-square}. Taking square roots yields
	\eqref{eq:bound-D-half}. This completes the proof.


\begin{thebibliography}{99}
\bibitem{assaad2022fra}
O. Assaad, D. Nualart, C. A. Tudor and L. Viitasaari,
Quantitative normal approximations for the stochastic fractional heat equation,
\emph{Stoch. Partial Differ. Equ. Anal. Comput.} \textbf{10} (2022), 223--254.

\bibitem{balan2022hyperbolic}
R. M. Balan, D. Nualart, L. Quer-Sardanyons and G. Zheng,
The hyperbolic Anderson model: moment estimates of the Malliavin derivatives and applications,
\emph{Stoch. Partial Differ. Equ. Anal. Comput.} \textbf{10} (2022), 757--827.

\bibitem{balan2023central}
R. M. Balan and W. Yuan,
Central limit theorems for heat equation with time-independent noise: The regular and rough cases,
\emph{Infin. Dimens. Anal. Quantum Probab. Relat. Top.} \textbf{26} (2023), 2250029.

\bibitem{balan2025gaussian}
R. M. Balan, J. Huang, X. Wang, P. Xia and W. Yuan,
Gaussian fluctuations for the wave equation under rough random perturbations,
\emph{Stochastic Process. Appl.} \textbf{182} (2025), 104569.

\bibitem{balan2025gaussian2}
R. M. Balan and M. Salins,
Gaussian fluctuations for the nonlinear stochastic heat equation with drift,
\emph{arXiv preprint arXiv:2512.12119} (2025).

\bibitem{bolanos2021averaging}
R. Bola\~nos Guerrero, D. Nualart and G. Zheng,
Averaging 2d stochastic wave equation,
\emph{Electron. J. Probab.} \textbf{26} (2021), 1--32.

\bibitem{caputo1967linear}
M. Caputo,
Linear models of dissipation whose \(Q\) is almost frequency independent---II,
\emph{Geophys. J. Int.} \textbf{13} (1967), 529--539.

\bibitem{chen2015moments}
L. Chen and R. C. Dalang,
Moments, intermittency and growth indices for the nonlinear fractional stochastic heat equation,
\emph{Stoch. Partial Differ. Equ. Anal. Comput.} \textbf{3} (2015), 360--397.

\bibitem{chen2019}
L. Chen and J. Huang,
Comparison principle for stochastic heat equation on $\mathbb{R}^d$,
\emph{Ann. Probab.} \textbf{47} (2019), 989--1035.

\bibitem{chen2021spatial}
L. Chen, D. Khoshnevisan, D. Nualart and F. Pu,
Spatial ergodicity for SPDEs via Poincaré-type inequalities,
\emph{Electron. J. Probab.} \textbf{26} (2021), 1--37.

\bibitem{chen2022central}
L. Chen, D. Khoshnevisan, D. Nualart and F. Pu,
Central limit theorems for parabolic stochastic partial differential equations,
\emph{Ann. Inst. Henri Poincar\'e Probab. Stat.} \textbf{58} (2022), 1052--1077.

\bibitem{chen2022spatial}
L. Chen, D. Khoshnevisan, D. Nualart and F. Pu,
Spatial ergodicity and central limit theorems for parabolic Anderson model with delta initial condition,
\emph{J. Funct. Anal.} \textbf{282} (2022), 109290.

\bibitem{chen2023central}
L. Chen, D. Khoshnevisan, D. Nualart and F. Pu,
Central limit theorems for spatial averages of the stochastic heat equation via Malliavin--Stein's method,
\emph{Stoch. Partial Differ. Equ. Anal. Comput.} \textbf{11} (2023), 122--176.

\bibitem{dalang1999extending}
R. C. Dalang,
Extending martingale measure stochastic integral with applications to spatially homogeneous SPDE's,
\emph{Electron. Commun. Probab.} \textbf{4} (1999), 43--61.

\bibitem{debbi2005solutions}
L. Debbi and M. Dozzi,
On the solutions of nonlinear stochastic fractional partial differential equations in one spatial dimension,
\emph{Stochastic Process. Appl.} \textbf{115} (2005), 1764--1781.

\bibitem{dhoyer2022spatial}
R. Dhoyer and C. A. Tudor,
Spatial average for the solution to the heat equation with Rosenblatt noise,
\emph{Stochastic Anal. Appl.} \textbf{40} (2022), 951--966.

\bibitem{foondun2017asymptotic}
M. Foondun and E. Nane,
Asymptotic properties of some space-time fractional stochastic equations,
\emph{Math. Z.} \textbf{287} (2017), 493--519.

\bibitem{huang2020}
J. Huang, D. Nualart and L. Viitasaari,
A central limit theorem for the stochastic heat equation,
\emph{Stochastic Process. Appl.} \textbf{130} (2020), 7170--7184.

\bibitem{huang2020color}
J. Huang, D. Nualart, L. Viitasaari and G. Zheng,
Gaussian fluctuations for the stochastic heat equation with colored noise,
\emph{Stoch. Partial Differ. Equ. Anal. Comput.} \textbf{8} (2020), 402--421.

\bibitem{khoshnevisan2021spatial}
D. Khoshnevisan, D. Nualart and F. Pu,
Spatial Stationarity, Ergodicity, and CLT for Parabolic Anderson Model with Delta Initial Condition in Dimension $d\ge$1,
\emph{SIAM J. Math. Anal.} \textbf{53} (2021), 2084--2133.

\bibitem{kim2022limit}
K. Kim and J. Yi,
Limit theorems for time-dependent averages of nonlinear stochastic heat equations,
\emph{Bernoulli} \textbf{28} (2022), 214--238.

\bibitem{li2023gaussian}
Z. Li and F. Pu,
Gaussian fluctuation for spatial average of super-Brownian motion,
\emph{Stochastic Anal. Appl.} \textbf{41} (2023), 752--769.

\bibitem{liu2023gaussian}
J. Liu and G. Shen,
Gaussian fluctuation for spatial average of the stochastic pseudo-partial differential equation with fractional noise,
\emph{ALEA Lat. Am. J. Probab. Math. Stat.} \textbf{20} (2023), 1483--1509.


\bibitem{meerschaert2004limit}
M. M. Meerschaert and H. P. Scheffler,
Limit theorems for continuous-time random walks with infinite mean waiting times,
\emph{J. Appl. Probab.} \textbf{41} (2004), 623--638.

\bibitem{mijena2015space}
J. B. Mijena and E. Nane,
Space–time fractional stochastic partial differential equations,
\emph{Stochastic Process. Appl.} \textbf{125} (2015), 3301--3326.


\bibitem{nourdin2012normal}
I. Nourdin and G. Peccati,
\emph{Normal Approximations with Malliavin Calculus: From Stein's Method to Universality},
Cambridge Tracts in Mathematics, Vol. 192, Cambridge University Press, Cambridge, 2012.

\bibitem{Nualart2006}
D. Nualart,
\emph{The Malliavin Calculus and Related Topics},
Second Edition, Springer-Verlag, Berlin, 2006.

\bibitem{nualart2021spatial}
D. Nualart, X. Song and G. Zheng,
Spatial averages for the parabolic Anderson model driven by rough noise,
\emph{ALEA Lat. Am. J. Probab. Math. Stat.} \textbf{18} (2021).

\bibitem{nualart2022quantitative}
D. Nualart, P. Xia and G. Zheng,
Quantitative central limit theorems for the parabolic Anderson model driven by colored noises,
\emph{Electron. J. Probab.} \textbf{27} (2022), 1--43.

\bibitem{nualart2022central}
D. Nualart and G. Zheng,
Central limit theorems for stochastic wave equations in dimensions one and two,
\emph{Stoch. Partial Differ. Equ. Anal. Comput.} \textbf{10} (2022), 392--418.

\bibitem{pu2022gaussian}
F. Pu,
Gaussian fluctuation for spatial average of parabolic Anderson model with Neumann/Dirichlet/periodic boundary conditions,
\emph{Trans. Amer. Math. Soc.} \textbf{375} (2022), 2481--2509.


\bibitem{Walsh1986}
J. B. Walsh,
An introduction to stochastic partial differential equations,
in: \emph{École d'Été de Probabilités de Saint-Flour XIV---1984}, Lecture Notes in Math., Vol. 1180, Springer, Berlin, 1986, pp. 265--439.

\bibitem{yan2018large}
L. Yan and X. Yin,
Large deviation principle for a space-time fractional stochastic heat equation with fractional noise,
\emph{Fract. Calc. Appl. Anal.} \textbf{21} (2018), 462--485.

\bibitem{zhang2025functional}
W. Zhang, Y. Zhang and J. Li,
Functional central limit theorems for spatial averages of the parabolic Anderson model with delta initial condition in dimension $d\ge$1,
\emph{Lithuanian Math. J.} \textbf{65} (2025), 608--642.
\end{thebibliography}
\end{document}